\titleformat{\subsection}[runin]{\normalfont\bfseries}{\thesubsection.}{.5em}{}[.~ ]
\titlespacing{\subsection}{0pt}{1.5ex plus .1ex minus .2ex}{0pt}
\theoremstyle{plain}
\newtheorem{lemma}[equation]{Lemma}
\newtheorem{prop}[equation]{Proposition}
\newtheorem{cor}[equation]{Corollary}
\theoremstyle{definition}
\newtheorem{exe}[equation]{Example}
\newtheorem{defi}[equation]{Definition}
\newtheorem{rmk}[equation]{Remark}
\newcommand{\pp}{{\pmb{p}}}
\newcommand{\qq}{{\pmb{q}}}
\newcommand{\xx}{{\pmb{x}}}
\newcommand{\uu}{\pmb{u}}
\newcommand{\vv}{\pmb{v}}
\newcommand{\ee}{\varepsilon}
\newcommand{\ta}{\mathrm{\mathrm{ta}}}
\newcommand{\tn}{\mathrm{\mathrm{Tn}}}
\newcommand{\SU}{\mathrm{\mathrm{SU}}}
\newcommand{\UU}{\mathrm{\mathrm{U}}}
\newcommand{\PU}{\mathrm{\mathrm{PU}}}
\newcommand{\re}{\mathrm{\mathrm{Re}\,}}
\newcommand{\PP}{\mathbb{P}}
\newcommand{\HH}{\mathbb{H}}
\newcommand{\BB}{\mathbb{B}}
\newcommand{\EE}{\mathbb{E}}
\newcommand{\CC}{\mathbb{C}}
\newcommand{\KK}{\mathbb{K}}
\newcommand{\FF}{\mathbb{F}}
\newcommand{\SP}{\mathbb{S}}
\newcommand{\RR}{\mathbb{R}}
\newcommand{\DD}{\mathbb{D}}
\newcommand{\de}{{\mathrm d}}
\newcommand{\Lin}{\mathrm{Lin}}
\newcommand{\proj}{\mathrm{proj}}
\newcommand{\id}{\mathrm{id}}
\title{\vspace{-15mm}\fontsize{16pt}{10pt}\selectfont\textbf{Geometry over algebras}}
\author{
	\large
	\textsc{Hugo Cattarucci Botós}\thanks{Supported by S\~ao Paulo Research Foundation (FAPESP)}\\
	\normalsize \href{}{hugocbotos@usp.br}\\
	\normalsize{Departamento de Matem\'atica, ICMC, Universidade de S\~ao Paulo, S\~ao Carlos, Brasil}	
	\vspace{-5mm}
}
\date{}
\begin{document}
	
\maketitle

\begin{center}
	\large\textbf{Abstract}
\end{center}
    We study geometric structures arising from Hermitian forms on linear spaces over real algebras beyond the division ones. Our focus is on the dual numbers, the split-complex numbers, and the split-quaternions. The corresponding geometric structures are employed to describe the spaces of oriented geodesics in the hyperbolic plane, the Euclidean plane, and the round $2$-sphere. We also introduce a simple and natural geometric transition between these spaces. Finally, we present a projective model for the hyperbolic bidisc, that is, the Riemannian product of two hyperbolic discs.

    \section{Introduction}
    Following \cite{coordinatefree}, classic geometries emerge from a linear space endowed with a Hermitian form. Typical examples of such geometries are the real/complex/quaternionic projective spaces with Fubini-Study metric, the real/complex/quaternionic hyperbolic spaces, the de Sitter spaces, and anti-de Sitter spaces, among others. Here, we extend the framework of classic geometries to the case of linear structures over real algebras other than the real numbers, the complex numbers, and the quaternions. This is necessary if one wants to, for example, describe natural geometric structures on the spaces of geodesics in usual classic geometries (for the spaces of geodesics in spherical, Euclidean, and hyperbolic geometries, see Section \ref{section space of geodesics}). As in \cite{coordinatefree}, we
    take the coordinate-free route and describe (pseudo)-Riemannian concepts and formulas in a simple algebraic form which is well suited, say, for scientific computation.
    
    The algebras we consider here, besides the associative real division algebras, are the simplest associative unital finite-dimensional involutive real algebras: split-complex numbers $\RR[x]/(x^2-1)$ (also known as hyperbolic numbers), dual numbers $\RR[x]/(x^2)$, and split-quaternions. The reason why we cling to these algebras is that the linear algebra over them is not too ill-behaved (see Section  \ref{section linear algebra}). Moreover, they are enough to describe the above mentioned spaces of geodesics.
    
    Geometries over split-complex and dual numbers were previously studied by S.~Trettel \cite{tre} from a homogeneous spaces approach. In contrast, we work with projective spaces (whose definition is, essentially, the usual one) where geometric structures are induced from a Hermitian form. So, our work is to \cite{tre} as \cite{coordinatefree} is to the usual symmetric/homogeneous approach to projective geometries. For instance, S.~Trettel develops a theory of transition of geometries showing how the one-parameter family of algebras $\RR[x]/(x^2+\delta)$ provides a transition between geometries over complex (${\delta>0}$), dual numbers ($\delta=0$), and split-complex ($\delta<0$) hyperbolic geometries. We describe this transition inside the split-quaternionic projective spaces. It is important to mention that geometries constructed from this one-parameter family of algebras appear in the work of J.~Danciger \cite{dan1}, \cite{dan2}, \cite{dan3}. Among several other results, J.~Danciger presents a transition between the three dimensional hyperbolic and anti-de Sitter spaces passing through a pipe geometry.
    
    Curiously, the hyperbolic bidisc (product of two Poincaré discs) appears as a projective classic geometry in the present context. The bidisc is an important space when it comes to uniformization questions in dimension $4$ \cite{cgr1}. For instance, the known examples of disc bundles uniformized by the bidisc support a bidisc variant of the Gromov-Lawson-Thurston conjecture \cite{glt}. The original GLT conjecture says that an oriented disc bundle over a closed oriented surface of genus~$\ge2$ admits a complete metric of constant negative curvature (a real hyperbolic structure) if, and only if, the Euler number $e$ of the bundle satisfies $|e| \leq |\chi|$, where $\chi$ is the Euler characteristic of the surface. However, all known examples of disc bundles uniformized by the complex hyperbolic space (see~\cite{discbundles}, \cite{bgr}, \cite{gkl}) and by the bidisc (see \cite{cgr1}) support that the GLT conjecture might hold also for these geometries. Describing the bidisc as a projective classic geometry may be a step towards understanding the relationships between such versions of the GLT conjecture; for example, the existence of a transition between the real, the complex and the bidisc hyperbolic geometries inside of a bigger classic geometry might connect the different versions of the GLT conjecture.

	\section{Linear algebra over real algebras}
	\label{section linear algebra}
	The goal of this section is establishing basic linear algebra tools to properly develop projective geometry over some non-division algebras.
	
	\subsection{Finite dimensional real algebras}
	Consider a real finite-dimensional unital associative algebra $\FF$. There is a natural $\RR$-algebra embedding $T:\FF \to \Lin_\RR(\FF,\FF)$ given by $a \mapsto T_a$, where $T_a(x):=ax$. So, left and right zero divisors coincide and a left inverse is also a right inverse. Denote by $\FF_z$ the set of zero-divisors and by $\FF^\times$ the set of units.
	
	\begin{prop} $\FF =\FF_z \sqcup \FF^\times$ 
	\end{prop}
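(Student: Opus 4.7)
The plan is to exploit the finite-dimensional linear algebra encoded in the embedding $T\colon\FF\to\Lin_\RR(\FF,\FF)$ already introduced in the text. Given $a\in\FF$, the map $T_a\colon\FF\to\FF$, $x\mapsto ax$, is an $\RR$-linear endomorphism of a finite-dimensional space, so by rank-nullity it is injective if and only if it is surjective if and only if it is bijective. This dichotomy will correspond exactly to the dichotomy ``zero-divisor versus unit''.

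First I would verify the disjointness $\FF_z\cap\FF^\times=\varnothing$. If $a\in\FF^\times$ had a two-sided inverse $b$ and simultaneously satisfied $ac=0$ for some $c\neq0$, then $c=1\cdot c=(ba)c=b(ac)=0$, a contradiction; the symmetric argument rules out right zero-divisors among units.

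Next I would establish the covering $\FF=\FF_z\cup\FF^\times$ by splitting on whether $T_a$ is injective. If $T_a$ is \emph{not} injective, there exists $x\neq 0$ with $ax=0$, so $a$ is a (left, hence by the remark preceding the proposition also right) zero-divisor. If $T_a$ is injective, then by finite-dimensionality it is surjective, so there exists $b\in\FF$ with $ab=1$; that is, $a$ has a right inverse. To upgrade this to a two-sided inverse, compute
\[
T_a(ba)=a(ba)=(ab)a=a=T_a(1),
\]
and injectivity of $T_a$ forces $ba=1$. Hence $a\in\FF^\times$.

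The argument is essentially a one-line application of rank-nullity together with the associative manipulation producing the left inverse from the right one; the only subtle point is making sure that ``zero-divisor'' and ``unit'' are interpreted symmetrically, which is exactly what the remark right before the proposition arranges. So no step looks like a real obstacle; the proof is primarily a careful unpacking of what the embedding $T$ tells us.
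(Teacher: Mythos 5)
Your proof is correct and follows essentially the same route as the paper: both arguments reduce the dichotomy to the injective/surjective dichotomy for the finite-dimensional $\RR$-linear map $T_a$ via rank--nullity. You merely spell out in full the step that a one-sided inverse is two-sided (via $T_a(ba)=T_a(1)$), which the paper delegates to the remark preceding the proposition.
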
 
	\begin{proof} Clearly, $\FF_z \cap \FF^\times = \emptyset$.
		Take $a \in \FF\setminus \FF^\times$. The map $T_a:\FF \to \FF$, $x \mapsto ax$, is $\RR$-linear. It cannot be surjective because that would imply $a\in\FF^\times$. Hence, its kernel is non-trivial. So, $a \in \FF_z$.
	\end{proof}
	
	\begin{prop} The subset $\FF_z$ of $\FF$ is a non-trivial real algebraic set. 
	\end{prop}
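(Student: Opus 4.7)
The plan is to exhibit an explicit polynomial whose zero set is exactly $\FF_z$, using the embedding $T:\FF\to\Lin_\RR(\FF,\FF)$ introduced just before the previous proposition.

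First I would fix a basis of $\FF$ so as to identify $\FF\cong\RR^n$ as a real vector space, and use it also to represent each endomorphism $T_a$ by an $n\times n$ real matrix. Because $T$ is $\RR$-linear in $a$, every entry of the matrix of $T_a$ is an $\RR$-linear function of the coordinates of $a$. Consequently the function
\[
P(a) := \det(T_a)
\]
is a polynomial (homogeneous of degree $n$) in the coordinates of $a$, i.e.\ $P\in\RR[x_1,\dots,x_n]$.

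Next I would invoke the preceding proposition to identify the locus $\{P=0\}$ with $\FF_z$. Indeed, $a\in\FF^\times$ iff $T_a$ is bijective iff $\det(T_a)\neq 0$; together with $\FF=\FF_z\sqcup\FF^\times$, this gives
\[
\FF_z = \{\,a\in\FF : P(a)=0\,\},
\]
so $\FF_z$ is a real algebraic subset of $\FF$.

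Finally, non-triviality amounts to the polynomial $P$ not being identically zero. This is immediate from $T_1=\id_\FF$, which yields $P(1)=\det(\id_\FF)=1\neq 0$; equivalently, $1\in\FF^\times\setminus\FF_z$, so $\FF_z\subsetneq\FF$. There is no serious obstacle here: the only point requiring care is making sure the polynomial in question is well defined independently of the chosen basis (it is, up to a nonzero scalar from the change-of-basis determinant, which does not affect the zero set), and this can be mentioned in one line or avoided altogether by simply fixing one basis once and for all.
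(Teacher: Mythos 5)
Your proposal is correct and follows exactly the paper's argument: both define $p(a)=\det(T_a)$, observe it is a real polynomial in the coordinates of $a$ whose zero set is $\FF_z$ (via the dichotomy $\FF=\FF_z\sqcup\FF^\times$), and note non-triviality because the polynomial is not identically zero. Your explicit check $P(1)=\det(\id_\FF)=1$ and the remark on basis-independence are minor elaborations of details the paper leaves implicit.
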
 
	\begin{proof} Consider the map $p:\FF \to \RR$, $a\mapsto\det(T_a)$.
	Then $a \in \FF_z$ if, and only if, $p(a)=0$. Since $p$ is a (non-zero, several variables) real polynomial, $\FF_z = \{x \in \FF: p(x)=0\}$ is a non-trivial real algebraic set.
	\end{proof}
	
	A real algebraic set can be written as a union of finite smooth manifolds \cite{sh} and we define its dimension as the largest dimension among such manifolds.
	
	\begin{cor} The subset $\FF_z$ of $\FF$ is a finite union of manifolds of dimension smaller than $\dim_\RR \FF$. In particular, $\FF^\times$ is open and dense in $\FF$.
	\end{cor}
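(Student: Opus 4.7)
The plan is to combine the preceding proposition, which exhibits $\FF_z$ as the zero set of the nonzero real polynomial $p(a)=\det(T_a)$, with the structure theorem from \cite{sh} that writes any real algebraic set as a finite union of smooth manifolds. The only substantive point is to upgrade ``finite union of manifolds'' to ``manifolds of dimension strictly smaller than $\dim_\RR\FF$.''

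First I would observe that $\FF_z=p^{-1}(0)$ is closed in $\FF$, and that its interior in $\FF$ is empty: if $p$ vanished on a nonempty open set of $\FF$, then by the identity principle for real polynomials in several variables $p$ would be identically zero, contradicting the non-triviality already established (equivalently, $1\in\FF^\times$ and $T_1=\id$, so $p(1)=1\neq 0$).

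Next I would invoke \cite{sh} to write $\FF_z=M_1\cup\cdots\cup M_k$ with each $M_i$ a smooth submanifold of $\FF$. If some $M_i$ had dimension equal to $\dim_\RR\FF$, it would be open in $\FF$, so $\FF_z$ would have nonempty interior, contradicting the preceding paragraph. Hence $\dim M_i<\dim_\RR\FF$ for every $i$, which is the first claim.

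For the ``in particular'' clause I would simply note that $\FF^\times=\FF\setminus\FF_z$: openness is immediate from closedness of $\FF_z$, and density is equivalent to $\FF_z$ having empty interior, which was proved above. The only potential obstacle is the clean invocation of \cite{sh}, but once the empty-interior observation is in hand the dimension bound is forced, so no serious difficulty is expected.
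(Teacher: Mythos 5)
Your argument is correct and follows exactly the route the paper intends: the preceding propositions give $\FF_z=p^{-1}(0)$ for the nonzero polynomial $p(a)=\det(T_a)$, the cited decomposition of real algebraic sets gives the finite union of manifolds, and the empty-interior observation (via $p(1)=1$ and the identity principle) forces the dimension bound and yields openness and density of $\FF^\times$. The paper leaves this corollary unproved, and your write-up supplies precisely the missing details with no gaps.
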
 

	From now on we assume that $\FF$ has an involutive structure: there is an algebra antiautomorphism $x \mapsto x^\ast$ of $\FF$ such that ${x^\ast}^\ast = x$. By antiautomorphism we mean that this map is an $\RR$-linear isomorphism, $1^*=1$, and $(xy)^*=y^*x^*$. An element $x$ is called self-adjoint when $x^\ast = x$. With a single exception (see Section \ref{section bidisc}), we will restrict ourselves to involutive real algebras where the self-adjoint elements of $\FF$ are exactly the real numbers.
	
	In the case when the quadratic form $N(x):= xx^\ast$ is non-degenerate, these algebras are called (associative) composition algebras. When $N$ is definite, then $\FF$ is one of $\RR,\CC,\HH$, where $\HH$ stands for the quaternions. Otherwise, the algebra is either the split-complex numbers $\CC_s$ or the split-quaternions $\HH_s$:
	
	\medskip
	
	$\bullet$ {\bf Split-complex numbers} $\CC_s:=\RR + j \RR$, with $j^2=1$ and involution $(x+j y)^*= x-jy$;
	
	\smallskip
	
	$\bullet$ {\bf Split-quaternions} $\HH_s:=\RR + i \RR+ j \RR + k \RR$, with $$i^2=-1,\quad j^2=1,\quad k^2 =1,$$ $$ ij = k,\quad ij=-ji,\quad ik=-ki,\quad jk= - kj,$$
		and involution $(x+iy+zj+wk)^* = x-iy-zj-wk$.
	
	\medskip
	
	There are also cases where $N$ is degenerate. For example, we have the
	
	\medskip
	
	$\bullet$ {\bf Dual numbers} $\DD:=\RR + \ee \RR$, with $\ee^2 =0$ and involution $(x+\ee y)^* = x-\ee y$.
	
	\medskip
	
	Note that the split-quaternions contain copies of the complex, split-complex, and dual numbers (the last one happens, for example, taking $\ee : = i+j$).
	
	The split-complex numbers $\CC_s$ can be naturally identified with the algebra $\RR \times \RR$ endowed with the involution $(a,b)^\times = (b,a)$. The isomorphism is given by the map \begin{align*}
	\RR + j\RR &\to \RR \times \RR\\
	x+jy &\mapsto (x+y,x-y)
	\end{align*}
	From this identification, we obtain that the set units $\CC_s^\times$ of the split-complex numbers is $\RR^\times \times \RR^\times$. The units of the dual numbers $\DD$ are of the form $a+\ee b$, where $a \in \RR^\times$.
	
	\subsection{Hermitian form}
	
	All modules in this paper are finite-dimensional free left-modules. Observe that, when $V$ is also free, the concept of dimension of $V$ as an $\FF$-module is well defined. Indeed, let $V$ be a free $\FF$-module with basis $e_1,\ldots,e_n$. Since each $\FF e_i$ is a real vector space with dimension $\dim_\RR \FF$, the number $n = \dim_\RR V/\dim_\RR \FF$ does not depend on the choice of basis. 
	
	\begin{defi}
		A Hermitian form on $V$ is a map $\langle \cdot,\cdot \rangle: V \times V \to \FF$ satisfying the following properties:
		\begin{itemize}
			\item $\langle u+v,w \rangle = \langle u,w\rangle +\langle v,w\rangle, \quad 
			u,v,w\in V$;
			\item $\langle zu,w \rangle = z \langle u,v \rangle, \quad u,v \in V, z\in\FF$;
			\item  $\langle u,v \rangle^\ast =\langle v,u \rangle, \quad u,v \in V$.
		\end{itemize}
	\end{defi}

	In particular $\langle u,u \rangle \in \RR$ for all $u \in V$.
	
	\medskip
	
	From now on, $V$ is a finite-dimensional left $\FF$-module equipped with a Hermitian form. An orthonormal basis consists of $b_1,\ldots,b_n\in V$ such that $\langle b_i,b_i \rangle=\pm 1$, $\langle b_i, b_j \rangle=0$ for $i \neq j$, and $V = \FF b_1 \oplus \cdots \oplus \FF b_n$.
	
	\begin{defi} A Hermitian form is non-degenerate if zero is the only vector perpendicular to all vectors.
	\end{defi}
	\iffalse
	\begin{lemma}\label{lemma dual number linear} Given a module $V$ over the dual numbers $\RR+ \epsilon \RR$, we have $ 2 \dim_\RR \epsilon V \leq \dim_\RR V$.
	\end{lemma}
	\begin{proof} The map $\phi: V \to \epsilon V$, $x \mapsto \epsilon x$, is surjective. Therefore, 
	$$\dim_\RR \epsilon V + \dim_\RR \ker \phi = \dim_\RR V.$$
	Since $\epsilon V \subset \ker \phi$, we obtain $ 2 \dim_\RR \epsilon V \leq \dim_\RR V$. 
	\end{proof}
	\fi

	\begin{lemma} \label{lemma orthogonal vectors extension} Consider a finite-dimensional free $\FF$-module $V$ equiped with a non-degenerate Hermitian form. 
	If $W$ is a proper subspace of $V$ that admits a orthonormal basis, then there exists $u \in V$ such that $\langle u,u \rangle \ne0$ and $\langle W,u \rangle =0$.
	{\rm(}In particular, there always exists $u \in V$ such $ \langle u, u \rangle \neq 0$.{\rm)}
	\end{lemma}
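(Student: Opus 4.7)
My plan is to look for $u$ inside the orthogonal complement $W^{\perp} := \{v \in V : \langle W, v \rangle = 0\}$ of $W$. First I would use the orthonormal basis $b_1,\ldots,b_k$ of $W$ (with $\langle b_i, b_i\rangle = \pm 1$) to define the projection
\[
\pi(v) := \sum_{i=1}^k \langle b_i, b_i\rangle\,\langle v, b_i\rangle\, b_i
\]
and verify, by pairing with each $b_j$, that $v - \pi(v) \in W^{\perp}$. This, together with $W \cap W^{\perp} = 0$ (which follows by expanding any $w \in W$ in the basis and pairing with each $b_j$), gives $V = W \oplus W^{\perp}$. A standard argument then shows the form remains non-degenerate on $W^{\perp}$: any $u \in W^\perp$ perpendicular to $W^\perp$ would also be perpendicular to $W$, hence to all of $V$, forcing $u = 0$. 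Since $W$ is proper, $W^{\perp}$ is a non-zero free $\FF$-module, and the task reduces to exhibiting an anisotropic vector inside it.

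This final step is the main obstacle, because the classical diagonalisation argument relies on division and therefore fails for the algebras with zero-divisors. I would argue by contradiction: assume $\langle u, u \rangle = 0$ for every $u \in W^{\perp}$. Polarising $\langle u + v, u + v \rangle = 0$ combined with $\langle u, v \rangle^* = \langle v, u \rangle$ shows $\langle u, v \rangle$ is always anti-self-adjoint; replacing $v$ by $zv$ and simplifying gives the identity $z\langle u, v \rangle = \langle u, v \rangle z^*$ for every $z \in \FF$, so $\langle u, v \rangle$ anti-commutes with each anti-self-adjoint element of $\FF$.

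A short case-check on each algebra then concludes. For $\FF \in \{\RR, \CC, \HH, \CC_s, \HH_s\}$, applying the anti-commutation condition to the standard anti-self-adjoint generators forces $\langle u, v \rangle = 0$ identically, directly contradicting non-degeneracy of the form on $W^{\perp}$. The dual numbers $\FF = \DD$ require a separate argument because there the constraint only pins $\langle u, v \rangle$ down to $\ee\RR$; however, for any basis vector $e$ of the non-zero free module $W^{\perp}$ one has $\ee e \neq 0$ and $\langle \ee e, v \rangle = \ee\langle e, v \rangle \in \ee^2 \RR = \{0\}$ for every $v \in W^{\perp}$, so $\ee e$ is a non-zero vector perpendicular to all of $W^{\perp}$, again contradicting non-degeneracy. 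The parenthetical clause in the statement is then the case $W = 0$.
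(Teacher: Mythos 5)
Your proposal follows essentially the same route as the paper's proof: split $V = W\oplus W^\perp$ using the orthonormal basis of $W$, assume for contradiction that every vector of $W^\perp$ is isotropic, polarize to conclude that $a := \langle u,v\rangle$ is anti-self-adjoint and satisfies $za = az^*$ for all $z\in\FF$, and finish with a case check on the algebras. The decomposition, the polarization identity, and the case check for $\RR,\CC,\HH,\CC_s,\HH_s$ are all correct and agree with the paper (the paper phrases the final contradiction against non-degeneracy on all of $V$ rather than on the restriction to $W^\perp$, but that difference is immaterial).

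The gap is in the dual-number case, which is exactly where the lemma is delicate. You assert without justification that $W^\perp$ is a \emph{free} $\DD$-module and then use a basis vector $e$ to guarantee $\ee e\ne 0$. A priori $W^\perp$ is only a direct summand of the free module $V$, hence projective; over $\DD$ finitely generated projective modules are indeed free (because $\DD$ is local), but that is precisely the kind of statement that must be argued when the scalars have zero-divisors, and nothing in your sketch supplies it. The paper instead shows that, under the contradiction hypothesis, every $u\in W^\perp$ satisfies $\ee u=0$, so that $W^\perp\subset\ee V$, and then derives $\dim_\RR W^\perp\le 2\dim_\RR\ee V-\dim_\RR V=0$ from $\ee W\oplus W^\perp\subset\ee V$. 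Your argument can be repaired without invoking projectivity: your own computation gives $\langle\ee u,h\rangle=0$ for every $u\in W^\perp$ and every $h\in V=W\oplus W^\perp$, so non-degeneracy on $V$ forces $\ee u=0$ for \emph{every} $u\in W^\perp$; freeness of $V$ then yields $W^\perp\subset\ee V=\ee W\oplus\ee W^\perp=\ee W\subset W$, whence $W^\perp\subset W\cap W^\perp=0$, contradicting $W^\perp\ne0$. As written, however, the step ``$\ee e\ne0$ for a basis vector $e$ of $W^\perp$'' rests on an unproved claim, and under your own hypothesis no such $e$ can exist, so the contradiction has to be extracted differently.
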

	\begin{proof}We assume $\FF\neq \RR$ (otherwise, the fact is trivial).
	
	Fix an orthonormal basis $b_1,\ldots, b_m$ for $W$.
	
	Let
	$W^\perp := \{u \in V\mid \langle u, W \rangle =0\}.$
	Note that $W \cap W^\perp = 0$ and that for  each $u \in V$, the vector $$u' := u - \sum_i \frac{\langle u,b_i \rangle}{\langle b_i,b_i \rangle} b_i$$ belongs to  $W^\perp$. Therefore, $V=W\oplus W^\perp$.

	Suppose that for all $v \in W^\perp$ we have $\langle v,v \rangle = 0$. Let us show that such assumption leads to a contradiction, thus proving the result.
		
	Fix $u \in W^\perp$. Note that $\langle u+h,u+h\rangle=0$ for all $h\in W^\perp$. So, $\langle u,h \rangle + \langle h,u \rangle =0$ for all $h \in W^\perp$. Clearly, $\langle u,h \rangle + \langle h,u \rangle =0$ for all $h \in V$.
		
	If $\FF$ is the split-complex or the complex numbers, there is $j \in \FF^\times$ such that $j^*=-j$. So, $\langle u,jh \rangle + \langle jh,u \rangle =0$ which implies
	$\langle u,h \rangle - \langle h,u \rangle =0$ for all $h\in V$, that is, $\langle u,h\rangle=0$ for all $h\in V$. Therefore, $u=0$ (the Hermitian form is non-degenerate), contradicting $W^\perp \neq 0$.
		
	For the quaternions and split-quaternions, we have the numbers $i,j,k \in \FF^\times$ that anti-commute among themselves and satisfy $$i^*=-i,\quad j^*=-j, \quad k^*=-k.$$ 
	For each of this numbers, we obtain
	$$i\langle u,h \rangle =\langle u,h \rangle i,$$	
	$$j\langle u,h \rangle =\langle u,h \rangle j,$$
	$$k\langle u,h \rangle =\langle u,h \rangle k.$$
	Since only real numbers commute with $i,j,k$, we conclude that $\langle u,h \rangle=0$ for all $h$. Hence $u=0$, contradicting $W^\perp \neq 0$.
		
	For the dual numbers, we have $\FF = \RR \oplus \RR\epsilon$, $\epsilon^2=0$. Proceeding as above, we obtain the identity $\epsilon\langle u,h \rangle - \epsilon \langle h,u \rangle =0$ for all $h \in V$ which implies $\langle\epsilon u,h \rangle =0$ and, therefore, $\epsilon u=0$. Since $V$ is free, this implies $u\in\epsilon V$. Thus, $W^\perp$ is a subspace of $\epsilon V$ implying
	$\epsilon W\oplus W^\perp \subset \epsilon V$. In particular, 
	$$\dim_\RR \epsilon W + \dim_\RR W^\perp  \leq  \dim_\RR \epsilon V.$$
	Since $W$ is free, $2\dim_\RR \epsilon W = \dim_\RR W$. Thus, it follows from $V = W \oplus W^\perp$ that
	$$\frac{\dim_\RR  V - \dim_\RR W^\perp}2 + \dim_\RR W^\perp \leq \dim_\RR \epsilon V$$
	and, therefore,
	$$ \dim_\RR W^\perp \leq 2 \dim_\RR \epsilon V - \dim_\RR V.$$
	We reached a contradiction: $2 \dim_\RR \epsilon V - \dim_\RR V = 0$ because $V$ is free, but $\dim_\RR W^\perp >0$.
		
	Finally, we conclude that there exists $u \in W^\perp$ such that $\langle u,u\rangle \neq 0$.
	\end{proof}
    
    We have the following corollaries:
	
	\begin{cor} \label{exists orthogonal basis} If the Hermitian form is non-degenerate then the finite-dimensional free $\FF$-module $V$ has an orthonormal basis.
	\end{cor}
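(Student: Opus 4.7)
My plan is a straightforward induction on the $\FF$-rank $n$ of $V$, using Lemma \ref{lemma orthogonal vectors extension} as the engine to produce one new orthonormal vector at a time. The case $n=0$ is vacuous (empty basis), so I focus on $n\ge 1$.

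For the base case $n=1$, the parenthetical assertion of the lemma yields $u\in V$ with $r:=\langle u,u\rangle\in\RR^\times$. Setting $b_1:=|r|^{-1/2}u$, one gets $\langle b_1,b_1\rangle=\pm1$, so $\{b_1\}$ is orthonormal and free: if $zb_1=0$ then $\pm z=\langle zb_1,b_1\rangle=0$. Its $\FF$-span has real dimension $\dim_\RR\FF=\dim_\RR V$, hence equals $V$.

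For the inductive step, I build the basis iteratively. Suppose I have an orthonormal tuple $b_1,\ldots,b_k$ with $k<n$ whose $\FF$-span $W_k$ is free of rank $k$. Then $\dim_\RR W_k=k\dim_\RR\FF<n\dim_\RR\FF=\dim_\RR V$, so $W_k$ is a proper subspace and admits the orthonormal basis $b_1,\ldots,b_k$. Apply Lemma \ref{lemma orthogonal vectors extension} to get $u\in V$ with $\langle u,u\rangle\in\RR^\times$ and $\langle W_k,u\rangle=0$, and normalize as above to $b_{k+1}$. Orthonormality of $b_1,\ldots,b_{k+1}$ is immediate from the construction, and the tuple is free: from $\sum_i z_ib_i=0$, pairing against $b_j$ gives $z_j\langle b_j,b_j\rangle=0$, so $z_j=0$. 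Thus $W_{k+1}:=W_k+\FF b_{k+1}$ is free of rank $k+1$ with orthonormal basis $b_1,\ldots,b_{k+1}$. Iterating until $k+1=n$, the real dimensions of $W_n$ and $V$ coincide, forcing $W_n=V$.

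The main subtlety, and the only place where the non-division nature of $\FF$ could cause trouble, is ensuring that each partial list $b_1,\ldots,b_k$ is $\FF$-free, so that the notion of rank is well defined and properness of $W_k$ can be checked in order to reapply the lemma. This works because $\langle b_j,b_j\rangle=\pm1$ is a unit, which kills any potential zero divisor in the pairing argument. Once that is in place, the induction is mechanical and the lemma does all the real work.
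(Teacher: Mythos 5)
Your proof is correct and follows exactly the route the paper intends: the corollary is stated without proof as an immediate consequence of Lemma \ref{lemma orthogonal vectors extension}, namely the induction you carry out, extending an orthonormal tuple one vector at a time and using the unit norms $\langle b_j,b_j\rangle=\pm1$ to verify freeness. Your attention to the freeness of each partial span $W_k$ (needed so that the lemma's hypotheses apply at every step) is exactly the right point to check over a non-division algebra $\FF$.
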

	
	\begin{cor} \label{exists orthogonal basis part 2} Let $V$ be a finite dimensional free $\FF$-module endowed with a non-degenerate Hermitian form. If $W$ is a free submodule of $V$ where the Hermitian form is non-degenerate, then any orthonormal basis of $W$ can be completed to an orthogonal basis of $V$.
	\end{cor}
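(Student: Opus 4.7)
The plan is to argue by induction on the codefect $\dim_\RR V - \dim_\RR W$, running what amounts to Gram-Schmidt by repeatedly applying Lemma \ref{lemma orthogonal vectors extension}. The base case $W=V$ is trivial, so assume $W$ is a proper submodule of $V$ equipped with an orthonormal basis $b_1,\ldots,b_m$.

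Since $W$ admits an orthonormal basis and is a proper submodule, Lemma \ref{lemma orthogonal vectors extension} produces $u\in V$ with $\langle u,u\rangle\neq 0$ and $\langle W,u\rangle=0$. Because $\langle u,u\rangle$ is a nonzero real number, it is a unit in $\FF$, and we can normalize by setting $b_{m+1}:=u/\sqrt{|\langle u,u\rangle|}$, so that $\langle b_{m+1},b_{m+1}\rangle=\pm 1$. Put $W':=W+\FF b_{m+1}$.

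The next step is to verify the three properties needed to continue the induction: $W'$ is a free submodule with orthonormal basis $b_1,\ldots,b_{m+1}$; the Hermitian form is non-degenerate on $W'$; and $\dim_\RR W'>\dim_\RR W$. For linear independence, a relation $\sum_i a_i b_i+c\,b_{m+1}=0$ gives, after pairing with $b_{m+1}$, the equation $c\langle b_{m+1},b_{m+1}\rangle=0$, hence $c=0$ (as $\pm 1$ is a unit); the $a_i$ then vanish because $b_1,\ldots,b_m$ is an $\FF$-basis of $W$. Non-degeneracy on $W'$ is checked similarly: if $w+c\,b_{m+1}$ is orthogonal to every element of $W'$, pairing with $W$ forces $w\in W^\perp\cap W=0$ by non-degeneracy on $W$, and pairing with $b_{m+1}$ forces $c=0$. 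Finally, $\dim_\RR W'=\dim_\RR W+\dim_\RR\FF$, so after finitely many steps we reach a submodule $W^{(k)}\subseteq V$ with $\dim_\RR W^{(k)}=\dim_\RR V$, forcing $W^{(k)}=V$.

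The argument is essentially Gram–Schmidt, and the only delicate point, already hidden inside Lemma \ref{lemma orthogonal vectors extension}, is the existence of a vector $u\perp W$ with $\langle u,u\rangle\neq 0$ when $\FF$ is a non-division algebra; once that lemma is available, the inductive step requires only that $\langle b_{m+1},b_{m+1}\rangle=\pm 1$ be a unit, which is what allows both the normalization and the linear-independence and non-degeneracy checks to go through uniformly over all the algebras $\FF$ considered in the paper.
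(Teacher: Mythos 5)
Your proof is correct and follows exactly the route the paper intends: the corollary is stated without proof as an immediate consequence of Lemma \ref{lemma orthogonal vectors extension}, obtained by the standard Gram--Schmidt-style induction of repeatedly adjoining a normalized orthogonal vector of nonzero norm until the real dimension forces the submodule to be all of $V$. Your verification of freeness, linear independence, and non-degeneracy at each step is sound and fills in precisely the routine details the paper omits.
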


	\begin{cor} \label{canonical hermitian forms are non-degenerate} If $V$ has an orthonormal basis then the Hermitian form is non-degenerate.
	\end{cor}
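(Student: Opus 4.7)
The plan is a direct unwinding of the definitions, using that orthonormal basis vectors have self-inner-product equal to $\pm 1$, which is a unit in $\FF$.

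Let $b_1,\ldots,b_n$ be an orthonormal basis of $V$ and suppose $u \in V$ satisfies $\langle u, v \rangle = 0$ for every $v \in V$. Since $V = \FF b_1 \oplus \cdots \oplus \FF b_n$, write $u = \sum_i z_i b_i$ with $z_i \in \FF$. First I would apply the hypothesis to each basis vector: using linearity in the first slot and orthogonality of the basis, compute
\[
0 = \langle u, b_j \rangle = \sum_i z_i \langle b_i, b_j \rangle = z_j \langle b_j, b_j \rangle = \pm z_j,
\]
so $z_j = 0$ for each $j$, and hence $u = 0$. This shows the form is non-degenerate.

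There is essentially no obstacle: the key point is that $\pm 1 \in \FF^\times$ for any of the algebras under consideration, so we can cancel $\langle b_j, b_j \rangle$ without worrying about zero divisors. Note also that the freeness of the decomposition $V = \bigoplus \FF b_i$ (part of the definition of orthonormal basis) ensures the coordinates $z_i$ are uniquely determined by $u$, so the conclusion $z_j = 0$ for all $j$ really does force $u = 0$.
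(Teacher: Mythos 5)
Your proof is correct and is exactly the direct computation the paper intends (it leaves this corollary without an explicit proof): expand $u$ in the orthonormal basis, pair with each $b_j$, and use that $\langle b_j,b_j\rangle=\pm1$ is a unit to conclude each coordinate vanishes.
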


	\subsection{Good points}
	
	\begin{defi} We say that $u \in V$ is a {\bf good point} if there exists a basis for $V$ such that $\sum_i u_i \FF = \FF$, where $u_1,\ldots,u_n$ are the coordinates of $u$ on such basis. We denote the set of all good points by~$V^\bullet$. 
	\end{defi}
	
	\noindent
	(Clearly, for the concept of good point to be well defined, we need $V$ to be free.) Alternatively, a point $u$ is good if for every non-degenerate Hermitian form $\langle \cdot,\cdot \rangle$ on $V$ there exists $v\in V$ such that $\langle u,v \rangle =1$ (see Proposition \ref{goodpoint alternative definition}).
	
	\begin{prop} If $u\in V$ is a good point, then for every basis of $V$ we have $\sum_i u_i \FF = \FF$, where $u_1,\ldots,u_n$ are the coordinates of $u$ on such basis.
	\end{prop}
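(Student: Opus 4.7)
The plan is a direct change-of-basis argument. Fix a basis $e_1,\dots,e_n$ witnessing that $u$ is a good point, so that $u = \sum_i u_i e_i$ with $\sum_i u_i \FF = \FF$, and let $e'_1,\dots,e'_n$ be any other basis of $V$, with corresponding coordinates $u'_1,\dots,u'_n$. The goal is to show $\sum_j u'_j \FF = \FF$.

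First I would write the transition $e'_j = \sum_i a_{ji}\, e_i$ for scalars $a_{ji}\in\FF$ and substitute into the expansion $u=\sum_j u'_j e'_j$. Comparing with $u=\sum_i u_i e_i$ and using that the $e_i$ form a basis, I obtain the identity $u_i = \sum_j u'_j\, a_{ji}$. The crucial point is the order of multiplication: because $V$ is a \emph{left} module, the coordinates $u'_j$ appear to the left of the $a_{ji}$, so each $u_i$ lies in the right ideal $\sum_j u'_j \FF$.

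Since right ideals are closed under right multiplication, $u_i\FF\subseteq\sum_j u'_j\FF$ for every $i$, and summing over $i$ gives $\FF = \sum_i u_i \FF \subseteq \sum_j u'_j \FF \subseteq \FF$, forcing equality. Thus the defining condition of a good point is independent of the basis used to verify it.

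The argument is essentially bookkeeping; the one place to be careful is the interaction between the left scalar multiplication that governs the expansion $u=\sum_j u'_j e'_j$ and the right multiplication defining the ideal $\sum_j u'_j \FF$. The substitution must be arranged so that the $u'_j$ end up on the left of the matrix entries $a_{ji}$, which is exactly what is needed for the inclusion into the right ideal generated by the $u'_j$. Nothing about the argument uses the invertibility of $A=(a_{ji})$, which is why the symmetric conclusion holds without further work.
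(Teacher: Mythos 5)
Your proof is correct and follows essentially the same change-of-basis computation as the paper: the identity $u_i=\sum_j u'_j a_{ji}$, with the new coordinates landing on the left, is exactly what drives the paper's argument too. Your right-ideal phrasing $\FF=\sum_i u_i\FF\subseteq\sum_j u'_j\FF$ is in fact slightly leaner, since the paper expands both transition matrices and explicitly builds a new witness $\widetilde{v}_k$ with $\sum_k\widetilde{u}_k\widetilde{v}_k=1$, whereas you correctly note that one direction of the change of basis already suffices.
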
	
	\begin{proof} Take a basis $e_i$ such that $$u= \sum_i u_i e_i \quad \text{and} \quad \sum_i u_i \FF = \FF.$$ 
		The second condition means that there are $v_1,\ldots,v_n \in \FF$ such that $\sum_i u_iv_i = 1$.
		
		Consider another basis $f_j$. We have $f_j = \sum_i \alpha_{ij} e_i$ and $e_j = \sum_i \beta_{ij} f_i$. Therefore, for each $i,j$, we have
		$$\sum_k \alpha_{ki} \beta_{jk} = \sum_k \beta_{ki} \alpha_{jk}  =\delta_{ij}.$$
		The coordinates of $u$ on the basis $f_k$ are given by $\widetilde {u_k} = \sum_i u_i \beta_{ki}$. For $ \widetilde {v_k}: = \sum_j \alpha_{jk} v_j$ we obtain
		$$\sum_k\widetilde{u_k} \widetilde{v_k} = \sum_{i,j,k} u_i \beta_{ki}\alpha_{jk} v_{j}=\sum_{i,j} \delta_{ij} u_i v_j =1,$$
		thus proving that 
		$$\sum_i \tilde{u_i}\FF = \FF.$$
	\end{proof}
	
	\begin{prop} The set of good points $V^\bullet$ is an open dense subset of $V$.
	\end{prop}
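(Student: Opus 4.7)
The plan is to reformulate the condition of being a good point as surjectivity of a natural $\RR$-linear map, so that both openness and density follow from standard facts. I would fix a basis $e_1,\ldots,e_n$ of $V$ and, for each $u\in V$ with coordinates $u_1,\ldots,u_n$, associate the $\RR$-linear map $\Psi_u:\FF^n\to\FF$, $(v_1,\ldots,v_n)\mapsto \sum_i u_i v_i$. Its image is precisely the right ideal $\sum_i u_i\FF$; being an $\RR$-subspace of $\FF$, this image coincides with $\FF$ as soon as it has the same $\RR$-dimension. Hence $u\in V^\bullet$ if and only if $\Psi_u$ is $\RR$-linearly surjective.

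For openness I would argue directly, in the spirit of the previous proposition. Given $u\in V^\bullet$, choose $v_1,\ldots,v_n\in\FF$ with $\sum_i u_i v_i=1$. For any $u'$ close enough to $u$ the element $w:=\sum_i u'_i v_i$ is close to $1$ in $\FF$, and therefore lies in the open set $\FF^\times$ established earlier. The identity $\sum_i u'_i(v_i w^{-1})=1$ then exhibits $u'$ as a good point, so $V^\bullet$ is open.

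For density I would use a polynomial criterion. Fixing also an $\RR$-basis of $\FF$, the map $\Psi_u$ is represented by a real matrix $M(u)$ of size $(\dim_\RR\FF)\times(n\dim_\RR\FF)$ whose entries are $\RR$-linear in the coordinates of $u$. The point $u$ fails to be good exactly when every maximal (square) minor of $M(u)$ vanishes, and these are polynomial conditions on $u$. To see that these polynomials are not all identically zero, it is enough to produce one good point, and $u=e_1$ will do: then $\Psi_{e_1}$ is the first projection, which is clearly $\RR$-surjective. Therefore $V\setminus V^\bullet$ is contained in a proper real algebraic subset of $V$, which is nowhere dense. The only step requiring real attention is the initial reformulation, where one uses that an $\RR$-subspace of $\FF$ of maximal $\RR$-dimension must be all of $\FF$; after this observation, everything reduces to routine rank-of-a-matrix considerations.
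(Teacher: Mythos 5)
Your proof is correct. The openness argument is essentially the paper's: the paper takes the neighborhood $U=\{x: \sum_i x_iv_i\in\FF^\times\}$ of $u$ and asserts $U\subset V^\bullet$, which is exactly the step you make explicit by writing $\sum_i u_i'(v_iw^{-1})=1$ for $w:=\sum_i u_i'v_i\in\FF^\times$. The density argument, however, takes a genuinely different route. The paper observes that $(\FF^\times)^n\subset V^\bullet$ and invokes the earlier corollary that $\FF^\times$ is dense in $\FF$ (itself proved via the polynomial $a\mapsto\det(T_a)$), so density of $V^\bullet$ is inherited from density of units in the algebra. You instead encode goodness of $u$ as surjectivity of the $\RR$-linear map $\Psi_u:\FF^n\to\FF$, note that the entries of its matrix $M(u)$ depend linearly on $u$, and characterize $V\setminus V^\bullet$ as the common zero set of the maximal minors of $M(u)$ --- a proper real algebraic set, hence nowhere dense, since $\Psi_{e_1}$ is surjective. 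Both arguments ultimately rest on the fact that a non-trivial real algebraic set has empty interior, but yours applies it directly on $V$ rather than on $\FF$; as a bonus, your algebraic-set description yields openness of $V^\bullet$ for free (the complement of a closed set), making the separate openness argument redundant, and it does not need the prior corollary on $\FF^\times$. The paper's version is shorter given the groundwork already laid. One cosmetic remark: the dimension-counting sentence in your reformulation is unnecessary --- surjectivity of $\Psi_u$ is by definition the statement that its image $\sum_i u_i\FF$ equals $\FF$, so no maximality-of-dimension observation is needed.
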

	\begin{proof} We may suppose $V\!\!:=\!\FF^n$ because $V$ is free. For  $u \in V^\bullet$ there is $v \in \FF^n$ such that $\sum_i u_iv_i =1$. Note that
		$$U=\big\lbrace x \in V: \sum_i x_iv_i \in \FF^\times \big\rbrace \subset V^\bullet$$
		is an open neighborhood of $u$, since $\FF^\times$ is open in $\FF$. Thus, $V^\bullet$ is open. To see that it is also dense, just note that $V^ \bullet$ contains the dense subset $(\FF^\times)^n$, where we are using that $\FF^\times$ is dense in $\FF$.
	\end{proof}
	
	\begin{prop}\label{goodpoint alternative definition} Assume that $V$ is equipped with a non-degenerate Hermitian form. A point $u\in V$ is good if, and only if, the map $h \mapsto\langle u,h \rangle$ from $V$ to $\FF$ is surjective.
	\end{prop}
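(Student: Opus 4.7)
The plan is to work in an orthonormal basis and directly compute the image of the map $h \mapsto \langle u,h\rangle$ in coordinates, showing that this image coincides with the right ideal $\sum_i u_i \FF$. Since being a good point is precisely the statement that this ideal equals $\FF$, both implications follow at once.

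In more detail, first I invoke Corollary \ref{exists orthogonal basis} to fix an orthonormal basis $b_1,\ldots,b_n$ of $V$, with $\varepsilon_i := \langle b_i,b_i\rangle \in \{\pm1\}$. Writing $u=\sum_i u_ib_i$ and $h=\sum_i h_ib_i$, I then expand $\langle u,h\rangle$ using linearity in the first argument together with the identity $\langle u,zv\rangle=\langle u,v\rangle z^\ast$ (which is a direct consequence of the axioms). This gives the clean formula
\begin{equation*}
\langle u,h\rangle \;=\; \sum_i \varepsilon_i\, u_i\, h_i^{\ast}.
\end{equation*}

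Next I observe that as $h$ ranges over $V$, each coordinate $h_i$ ranges independently over $\FF$; since the involution is an $\RR$-linear bijection of $\FF$ onto itself and $\varepsilon_i=\pm 1\in\FF^\times$, the elements $\varepsilon_i h_i^{\ast}$ also range independently over $\FF$. Consequently the image of $h\mapsto\langle u,h\rangle$ is exactly $\sum_i u_i\FF$, the right ideal of $\FF$ generated by the coordinates of $u$ in the basis $b_1,\ldots,b_n$.

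Finally, the previous proposition guarantees that the condition $\sum_i u_i\FF=\FF$ is independent of the choice of basis, so it may be checked in the orthonormal basis: $u$ is a good point if and only if $\sum_i u_i\FF=\FF$, which by the preceding paragraph is equivalent to surjectivity of $h\mapsto\langle u,h\rangle$. There is no real obstacle here; the only thing to be careful about is the conjugate-linearity in the second slot, which is why writing the formula in an orthonormal basis (rather than an arbitrary one) makes the argument transparent.
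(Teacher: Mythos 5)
Your proof is correct and follows essentially the same route as the paper: both fix an orthogonal basis, expand $\langle u,h\rangle$ in coordinates using the conjugate-linearity in the second slot, and match surjectivity against the condition $\sum_i u_i\FF=\FF$ (the paper constructs the preimage of $1$ explicitly from $\alpha_i$ with $\sum_i u_i\alpha_i=1$, while you describe the whole image as the right ideal generated by the coordinates, which amounts to the same computation). No issues.
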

	\begin{proof} Consider an orthogonal basis $b_1,\ldots,b_n$ and define $c_i = \langle b_i,b_i \rangle$. 
		Take $u \in V^\bullet$. Writing $u=\sum_i u_i b_i$, let $\alpha_i \in \FF$ be such that
		$$\sum_i u_i\alpha_i =1.$$
		The vector
		$$h = \sum_i  c_i^{-1}\alpha_i^\ast b_i$$
		satisfies
		$$\langle u,h \rangle = 1.$$
		Thus, the map $\langle u,- \rangle$ is surjective. The converse is analogous.
	\end{proof}
	
	\subsection{Orthogonal complement of a good point}
	
	\begin{prop} \label{good point gives a basis} Let $p \in V^\bullet$. There exist $b_2,\ldots,b_n \in V$ such that $p,b_2,\ldots,b_n$ is a basis.
	\end{prop}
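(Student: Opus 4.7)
The plan is to extract a left $\FF$-linear functional $\psi:V\to\FF$ with $\psi(p)=1$ from the goodness hypothesis, use it to split $V$ as $\FF p\oplus\ker\psi$, and then produce a basis of the kernel. Fixing a basis $e_1,\ldots,e_n$ of $V$ and writing $p=\sum_i p_i e_i$, the definition of good point furnishes $v_1,\ldots,v_n\in\FF$ with $\sum_i p_i v_i=1$; the map $\psi$ defined by $\psi(e_i):=v_i$ is left $\FF$-linear and satisfies $\psi(p)=1$. From this $V=\FF p\oplus\ker\psi$: the intersection is zero since $\alpha p\in\ker\psi$ forces $\alpha=\psi(\alpha p)=0$, and the decomposition $x=\psi(x)p+(x-\psi(x)p)$ covers all of $V$. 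The same identity $\sum_i p_i v_i=1$ shows $\FF p$ is free of rank one, since $\alpha p=0$ implies every $\alpha p_i=0$ and hence $\alpha=\alpha\sum_i p_i v_i=0$.

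The main obstacle is then showing that $\ker\psi$ is free of rank $n-1$, and this is where the specific structure of the six algebras enters. For the division algebras $\RR,\CC,\HH$ and for the local ring $\DD$, the identity $\sum_i p_i v_i=1$ forces some $p_k$ to be a unit: for division algebras, immediately, and for $\DD$ because otherwise all $p_i$ would lie in $\epsilon\DD$ and their combinations could not equal $1$. The $n-1$ vectors $f_i=e_i-v_ip$ with $i\ne k$ are then readily verified to be a basis of $\ker\psi$, using the relation $\sum_i p_i f_i=0$ to eliminate $f_k$ from the $n$ generators $\{f_1,\ldots,f_n\}$ of $\ker\psi$, together with a direct independence check that uses the unit $p_k$. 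For $\CC_s\cong\RR\times\RR$, the primitive idempotents split $V$ as a pair of real vector spaces $(V_+,V_-)$, and the goodness of $p$ translates into both real components $p_\pm$ being nonzero; extending each to a real basis of $V_\pm$ and recombining through the idempotents produces the required $\FF$-basis of $V$. The case $\HH_s\cong M_2(\RR)$ is handled analogously via Morita equivalence with $\RR$, where the problem reduces to extending a nonzero real vector to a real basis.

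Once a basis $b_2,\ldots,b_n$ of $\ker\psi$ is obtained, combining it with $p$ yields a basis of $V$: spanning is immediate from the direct sum decomposition, and any vanishing combination $\alpha_1 p+\sum_{i\ge 2}\alpha_i b_i=0$ forces $\alpha_1 p\in\FF p\cap\ker\psi=0$, hence $\alpha_1=0$ by freeness of $\FF p$, after which the independence of the $b_i$ inside $\ker\psi$ kills the remaining coefficients.
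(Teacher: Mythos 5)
Your argument is correct, and it takes a genuinely different route from the paper's. The paper stays entirely inside the Hermitian machinery: it equips $\FF^n$ with the standard non-degenerate form $\langle x,y\rangle=\sum_i x_iy_i^*$, uses goodness to find $q$ with $\langle p,q\rangle=1$, produces an orthonormal basis of the plane $W=\FF p\oplus\FF q$ (or simply of $\FF p$ when $\langle p,p\rangle\ne0$), and then extends it to an orthogonal basis of $V$ via Lemma \ref{lemma orthogonal vectors extension}; the algebra-by-algebra case analysis is thereby outsourced to that lemma. You instead replace the form by a left $\FF$-linear functional $\psi$ with $\psi(p)=1$, which buys the splitting $V=\FF p\oplus\ker\psi$ even when $p$ is null, and you then establish freeness of $\ker\psi$ directly. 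For $\RR,\CC,\HH,\DD$ your unit-coordinate argument (some $p_k$ is a unit, and the $f_i=e_i-v_ip$ with $i\ne k$ form a basis of $\ker\psi$) is complete and more elementary than the paper's, needing no Hermitian form at all. For the split algebras you switch to structure theory, which also works, but one description is imprecise: under $\HH_s\cong M_2(\RR)$ a free module of rank $n$ corresponds to $\RR^{2n}$ and a good point to a $2\times 2n$ real matrix of rank $2$, so the reduction is to extending a linearly independent \emph{pair} of vectors to a basis of $\RR^{2n}$ partitioned into $n$ blocks of two, not to extending a single nonzero vector; the conclusion is of course just as immediate, but this case deserves the extra sentence since it is the only genuinely noncommutative non-division case. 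A small bonus of the paper's route is that the complementary basis vectors come out orthogonal to $p$ whenever $\langle p,p\rangle\ne0$, which is exactly what Proposition \ref{existence of p perp} then exploits.
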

	\begin{proof} We assume $V= \FF^n$. Consider the Hermitian form 
		$$\langle x,y \rangle = \sum_i x_iy_i^*.$$
		
		If $\langle p,p \rangle  \neq 0$, then the result follows from Lemma \ref{lemma orthogonal vectors extension}. Thus, we may assume $\langle p,p \rangle = 0$.
		
		Since $p \in (\FF^n)^\bullet$, there exists $q \in \FF^n$ such that $\langle p,q \rangle =1$. 
		
		Consider the  $\FF$-linear space $W = \FF p + \FF q$. 
		Note that $\FF p \cap \FF q=0$. Indeed, if $xp+yq = 0$ then $y=0$ since $x\langle p,p \rangle + y\langle q,p \rangle = y$. We obtain $xp=0$ which implies $\langle xp,q\rangle=x=0$. Thus, $W = \FF p \oplus \FF q$. We will show that $W$ admits an orthonormal basis and the result will follow from Lemma \ref{lemma orthogonal vectors extension}.
		
		First we assume $\langle q,q \rangle \neq 0$. We may suppose $\langle q,q \rangle =\pm 1$. For $p' := p-\frac{\langle p,q \rangle}{\langle q,q \rangle}q$, we obtain $W=\FF p' + \FF q$, $\langle p',p' \rangle = \mp 1$, $\langle p',q\rangle =0$. Thus, $p',q$ form an orthonormal basis of $W$.
		
		Finally, we take $\langle q,q \rangle=0$. Take $p'=p+q$ and $q'=p-q$. We have $W= \FF p' \oplus \FF q'$, $\langle p',p' \rangle = -\langle q',q' \rangle =2$ and $\langle p',q' \rangle = 0$.
	\end{proof}

	\begin{prop} \label{existence of p perp} Assume that $V$ is equipped with a Hermitian form. Let $p \in V$ be such that $\langle p,p \rangle \neq 0$. Then $V=\FF p \oplus p^\perp$ and $p^\perp$ is free, where $$p^\perp:=\{v \in V: \langle v, p \rangle =0\}$$
	stands for the orthogonal complement of $p$.
	
	\end{prop}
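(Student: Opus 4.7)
The plan is to decompose $V$ using orthogonal projection onto $p$, and then argue that the complement is free by exhibiting an explicit basis, using Proposition \ref{good point gives a basis}.

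First I would note that $\langle p,p\rangle$ is a nonzero real number (since the Hermitian form takes self-adjoint values on $\langle v,v\rangle$ and self-adjoint elements of $\FF$ are real by the standing assumption), so in particular it is a unit in $\FF$. This lets me run the usual projection argument: for every $v\in V$, write
$$v = \frac{\langle v,p\rangle}{\langle p,p\rangle}\,p + \left(v - \frac{\langle v,p\rangle}{\langle p,p\rangle}\,p\right),$$
where the second summand lies in $p^\perp$. This gives $V=\FF p + p^\perp$. For the direct sum, if $xp\in p^\perp$ then $0=\langle xp,p\rangle=x\langle p,p\rangle$, forcing $x=0$ because $\langle p,p\rangle\in\RR^\times$. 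The same computation shows that $\FF p$ is free of rank one.

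Next I would show that $p$ is a \emph{good point}, in order to apply Proposition \ref{good point gives a basis}. Fix any basis $e_1,\dots,e_n$ of $V$ and write $p=\sum_i p_i e_i$. By left linearity of $\langle\cdot,p\rangle$,
$$\langle p,p\rangle = \sum_i p_i\langle e_i,p\rangle \in \sum_i p_i\FF.$$
Since $\langle p,p\rangle$ is a unit in $\FF$, this forces $\sum_i p_i\FF=\FF$, so $p\in V^\bullet$. Proposition \ref{good point gives a basis} then gives vectors $b_2,\dots,b_n$ such that $p,b_2,\dots,b_n$ is a basis of $V$.

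Finally, to produce a basis of $p^\perp$, I would orthogonalize the $b_i$ against $p$ by setting
$$b_i' := b_i - \frac{\langle b_i,p\rangle}{\langle p,p\rangle}\,p, \qquad i=2,\dots,n.$$
Each $b_i'$ lies in $p^\perp$, and since this change of coordinates from $(p,b_2,\dots,b_n)$ to $(p,b_2',\dots,b_n')$ is unitriangular, the latter is again a basis of $V$. Any $v\in p^\perp$ decomposes uniquely as $v=xp+\sum_{i\ge 2} y_i b_i'$ in this basis; pairing with $p$ gives $x\langle p,p\rangle=0$, hence $x=0$. So $b_2',\dots,b_n'$ span $p^\perp$, and their linear independence follows from that of the full basis. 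Thus $p^\perp$ is free of rank $n-1$, completing the proof. The main conceptual step is recognizing that the hypothesis $\langle p,p\rangle\ne 0$ alone—without assuming non-degeneracy of the form on $V$—already forces $p$ to be a good point, which is what unlocks Proposition \ref{good point gives a basis}.
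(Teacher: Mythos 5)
Your proof is correct and follows essentially the same route as the paper: the projection $v\mapsto v-\frac{\langle v,p\rangle}{\langle p,p\rangle}p$ gives the splitting, and Proposition \ref{good point gives a basis} supplies a basis $p,b_2,\dots,b_n$ whose orthogonalization against $p$ yields a basis of $p^\perp$. The one place you go beyond the paper is in explicitly verifying that $\langle p,p\rangle\ne0$ forces $p\in V^\bullet$ (via $\langle p,p\rangle=\sum_i p_i\langle e_i,p\rangle$ being a unit), a hypothesis of Proposition \ref{good point gives a basis} that the paper invokes without comment; this is a worthwhile addition, not a divergence.
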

	\begin{proof} It is easy to see that $V=\FF p \oplus p^\perp$. Indeed, $\FF p \cap p^\perp = 0$ and for every $v \in V$ we have $$v=\frac{\langle v, p\rangle}{\langle p,p \rangle } p + \left(v-\frac{\langle v, p\rangle}{\langle p,p \rangle } p\right).$$
	By Proposition \ref{good point gives a basis} there exist $b_2,\ldots,b_n \in V$ such that $p,b_2,\ldots,b_n$ is a basis of $V$. The vectors 
	$$b_k-\frac{\langle b_k, p\rangle}{\langle p,p \rangle } p$$
	form a basis for $p^\perp$.	
	\end{proof}

	\section{Projective geometry}
	
	In this section we study the smooth structure of projective spaces over algebras. Their tangent spaces and vector fields are easily described via linear tools, analogous to what is done in \cite{coordinatefree}.
	
	Let $V$ be a finite dimensional free $\FF$-module.
	The projective space $\PP_\FF(V)$ is defined as
	$$\PP_{\FF}(V) := V^\bullet/\FF^\times.$$
	
	Whenever there is no possible confusion we omit the subscript $\FF$ and just write $\PP(V)$; given $p \in V^\bullet$, we denote by $\pp\in\PP(V)$ the corresponding point on the projective space.
		
	\begin{prop} The space $\PP(V)$ is a smooth manifold of dimension $\dim_\RR V - \dim_\RR \FF$. Furthermore, the quotient map $V^\bullet \to \PP(V)$ is a principal $\FF^\times$-bundle.
	\end{prop}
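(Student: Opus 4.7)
The plan is to build an atlas of affine charts on $\PP(V)$ mimicking the usual construction for classical projective spaces, where the role of \textbf{good points} is precisely to make the normalization $v\mapsto\varphi(v)^{-1}v$ well defined despite the possible noncommutativity or zero-divisors of $\FF$. For each $p\in V^\bullet$, the definition of a good point produces a left-$\FF$-linear functional $\varphi_p:V\to\FF$ with $\varphi_p(p)=1$: in a basis $e_1,\dots,e_n$ with $\sum_i p_iv_i=1$, just set $\varphi_p(e_i):=v_i$. Define
\[
\widetilde U_p:=\{v\in V:\varphi_p(v)\in\FF^\times\}.
\]
This is open in $V$ because $\FF^\times$ is open in $\FF$, and it lies in $V^\bullet$ because the functional $\varphi_p(v)^{-1}\varphi_p$ witnesses that $v$ is good. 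Let $\pi:V^\bullet\to\PP(V)$ be the quotient map and set $U_p:=\pi(\widetilde U_p)$; these sets cover $\PP(V)$.

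Next I would introduce the candidate chart
\[
\psi_p:U_p\longrightarrow\varphi_p^{-1}(1),\qquad \psi_p(\pp):=\varphi_p(v)^{-1}v,
\]
with $v$ any representative of $\pp$. Left-$\FF$-linearity of $\varphi_p$ combined with associativity yields $\varphi_p(\lambda v)^{-1}\lambda v=(\lambda\varphi_p(v))^{-1}\lambda v=\varphi_p(v)^{-1}v$, so $\psi_p$ is well defined on orbits; its set-theoretic inverse is the restriction of $\pi$ to the affine slice $\varphi_p^{-1}(1)$, so $\psi_p$ is a homeomorphism onto the open set $\varphi_p^{-1}(1)\cap\widetilde U_p$. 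Because $\varphi_p$ is $\FF$-linear and surjective, $\ker\varphi_p$ is free of rank $n-1$, so this slice is an affine subspace of real dimension $\dim_\RR V-\dim_\RR\FF$, giving the claimed dimension of $\PP(V)$.

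For the smoothness of transition maps, the composition $\psi_q\circ\psi_p^{-1}$ is $v\mapsto\varphi_q(v)^{-1}v$, so everything reduces to smoothness of inversion $\iota:\FF^\times\to\FF^\times$. Using the embedding $T:\FF\to\Lin_\RR(\FF,\FF)$ from the start of Section~\ref{section linear algebra}, one has $a^{-1}=T_a^{-1}(1)$; the entries of $T_a$ depend $\RR$-linearly on $a$ and its determinant is nonzero exactly on $\FF^\times$, so Cramer's rule expresses $\iota$ as a rational map with nonvanishing denominator, hence as a smooth map. For the principal bundle structure, the left action of $\FF^\times$ on $V^\bullet$ is smooth and free (if $\lambda v=v$, apply any $\FF$-linear $\varphi$ with $\varphi(v)=1$ to get $\lambda=1$), and over each $U_p$ the map $(\pp,\lambda)\mapsto \lambda\,\psi_p(\pp)$ is a smooth equivariant trivialization with smooth inverse $v\mapsto(\pi(v),\varphi_p(v))$.

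The main technical point is the well-definedness of the normalization $\varphi_p(v)^{-1}v$ on $\FF^\times$-orbits when $\FF$ is noncommutative: the cancellation $(\lambda\varphi_p(v))^{-1}\lambda v=\varphi_p(v)^{-1}v$ relies on $\varphi_p$ being left-$\FF$-linear and on $V$ being a left module, which is consistent with the setup. Everything else is then a direct translation of the classical construction, with Cramer's rule supplying the smoothness of inversion in the absence of a division-algebra formula like $a^{-1}=\bar a/|a|^2$.
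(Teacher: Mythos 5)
Your argument is essentially correct but takes a genuinely different route from the paper. The paper builds no charts at all: it proves that the map $\lambda:\FF^\times\times V^\bullet\to V^\bullet\times V^\bullet$, $(\alpha,v)\mapsto(\alpha v,v)$, is proper (using a non-degenerate Hermitian form and the characterization of good points via surjectivity of $\langle u,-\rangle$ to recover the limit of the scalars $\alpha_n$), and then invokes the quotient manifold theorem for free and proper Lie group actions, which delivers the smooth structure, the dimension, and the principal bundle structure in one stroke. Your construction is more hands-on and yields explicit affine coordinates and explicit local trivializations; its key ingredients --- that a good point admits a left-$\FF$-linear functional $\varphi_p$ with $\varphi_p(p)=1$, that the normalization is orbit-invariant because $(\lambda\varphi_p(v))^{-1}\lambda v=\varphi_p(v)^{-1}v$, and that inversion in $\FF^\times$ is smooth by Cramer's rule applied to the regular representation $T$ --- are all sound, and the dimension count via $\dim_\RR\ker\varphi_p=\dim_\RR V-\dim_\RR\FF$ needs only real rank--nullity (you do not actually need $\ker\varphi_p$ to be free over $\FF$).

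The one genuine gap is that you never address Hausdorffness (and second countability) of the quotient topology on $\PP(V)$. A space covered by chart domains homeomorphic to open subsets of $\RR^m$ with smooth transitions need not be Hausdorff (the line with two origins), and for quotients by group actions this is exactly the point that can fail; it is precisely what the paper's properness argument exists to guarantee. In your setup the repair is short: for distinct $\pp,\qq\in\PP(V)$, the set of left-linear functionals $\varphi$ with $\varphi(p)\notin\FF^\times$ is the zero set of the polynomial $\varphi\mapsto\det T_{\varphi(p)}$, which is a proper real algebraic subset of the space of functionals because $p$ is good; likewise for $q$. Hence some single $\varphi$ satisfies $\varphi(p),\varphi(q)\in\FF^\times$, so $\pp$ and $\qq$ lie in one common chart domain, which is homeomorphic to the Hausdorff affine slice $\varphi^{-1}(1)$, and they can be separated there. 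Second countability is automatic since $\pi$ is an open map. With that supplement your proof is complete.
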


	\begin{proof} Note that $\FF^\times$ is a Lie group and $V^\bullet$ is an open subset of $V$.
	Consider the smooth injective map $\lambda:\FF^\times \times V^\bullet \to V^\bullet \times V^\bullet$ defined by $\lambda(\alpha,v) := (\alpha v,v)$. Let us prove that such map is proper. 
	
	Consider a compact $K \subset V^\bullet\times V^\bullet$ and a sequence $(\alpha_n,v_n) \in \lambda^{-1}(K)$. Since $K$ is compact we may assume, without loss of generality, that $(\alpha_n v_n, v_n)$ converges in $K$ to a limit $(w, v)$.
	
	Since $V$ is free, it admits a non-degenerate Hermitian form $\langle -,- \rangle$. Take $h,h' \in V$ such that $\langle v,h \rangle=\langle w,h' \rangle =1$. Consider the scalars $\alpha := \langle w, h \rangle$ and $\beta := \langle v,h' \rangle$.
	
	Note that $$\alpha_n = \frac{\langle \alpha_n v_n,h \rangle}{\langle v_n,h \rangle}  \to \alpha \quad \text{and} \quad  \alpha \beta = \lim \alpha_n \langle v_n,h' \rangle=\langle w,h' \rangle =1.$$
	Thus, every sequence on $\lambda^{-1}(K)$ admits a convergent subsequence and, therefore, the map $\lambda$ is proper.
	
	Since the action of $\FF^\times$ on $V^\bullet$ is free and proper, $\PP(V):=V^\bullet/\FF^\times$ is a smooth manifold and the quotient map $V^\bullet \to \PP(V)$ is a principal $\FF^\times$-bundle.
	\end{proof}
	
	\begin{cor} We have the natural isomorphism $$C^\infty(\PP(V)) \simeq \{f \in C^\infty(V^\bullet): f \text{ is }\FF^\times  \text{-invariant}\}.$$
    \end{cor}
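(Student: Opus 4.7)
The plan is to exhibit the isomorphism as pullback along the principal bundle projection $\pi\colon V^\bullet \to \PP(V)$ from the previous proposition. Define $\Phi\colon C^\infty(\PP(V))\to C^\infty(V^\bullet)$ by $\Phi(f):=f\circ \pi$. Since $\pi$ is smooth, $\Phi(f)$ is smooth; since $\pi$ factors through the $\FF^\times$-orbits of $V^\bullet$, the image $\Phi(f)$ is automatically $\FF^\times$-invariant, so $\Phi$ lands in the claimed subspace. The map is $\RR$-linear and injective because $\pi$ is surjective (any two distinct points of $\PP(V)$ have distinct function values for a suitable $f$).

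For surjectivity, given a smooth $\FF^\times$-invariant $g\colon V^\bullet\to\RR$, define $\bar g\colon \PP(V)\to\RR$ by $\bar g(\pp):=g(p)$; this is well-defined by invariance and satisfies $\bar g\circ \pi = g$. To see that $\bar g$ is smooth, I would invoke local triviality of the principal $\FF^\times$-bundle established in the previous proposition: around any $\pp\in \PP(V)$ there is an open neighborhood $U$ together with a smooth section $s\colon U\to V^\bullet$ of $\pi$. Then $\bar g|_U = g\circ s$ is smooth as a composition of smooth maps, so $\bar g\in C^\infty(\PP(V))$ and $\Phi(\bar g)=g$. This shows $\Phi$ is a bijection, and the fact that it is an algebra isomorphism is immediate from pointwise definitions.

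The main ingredient is the existence of smooth local sections of $\pi$, but this is packaged into the principal bundle statement already proven, so no additional work is required. The argument is the standard descent of smooth functions along a smooth submersion with connected fibers given by a free proper group action; I do not anticipate any real obstacle beyond verifying that each appeal to the previous proposition is legitimate in the $\FF$-module setting.
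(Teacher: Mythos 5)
Your proof is correct and is exactly the argument the paper has in mind: the corollary is stated without proof precisely because it follows from the principal $\FF^\times$-bundle structure (hence local sections) established in the preceding proposition, which is the route you take. The only cosmetic remark is that injectivity of the pullback needs nothing more than surjectivity of the projection; your parenthetical about separating points by functions is unnecessary.
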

	Based on this corollary we will always think of smooth functions on the projective space as $\FF^\times$-invariant smooth functions on $V^\bullet$. 
	
	\begin{exe}\label{example real, complex and quaternionic projective spaces} If $\FF$ stands for $\RR$, $\CC$ or $\HH$, then we obtain the usual real, complex and quaternionic projective spaces $\PP_{\FF}^n := (\FF^{n+1})^\bullet/\FF^\times$. Observe that the projective lines are spheres in this case: $\PP_\RR^1 \simeq \SP^1$, $\PP_\CC^1 \simeq \SP^2$, $\PP_\HH^1 \simeq \SP^4$.  
	\end{exe}
	\begin{exe}\label{example split-complex projective spaces} If $\CC_s = \RR \times \RR$ stands for the split-complex numbers, then the projective space $\PP_{\CC_s}^n$ is diffeomorphic to $\PP_\RR^n \times \PP_\RR^n$. Indeed, the diffeomophism is  
	\begin{align*}
	\PP_{\CC_s}^n &\to \PP_\RR^n \times \PP_\RR^n\\
	[(x_0,y_0): \cdots:(x_n,y_n)] &\mapsto ([x_0:\cdots:x_n], [y_0:\cdots:y_n])
	\end{align*}
	In particular, the corresponding projective line is the torus $\PP_\RR^1 \times \PP_\RR^1$.
	
	The split-complex projective spaces model the point-hyperplane geometry. Indeed, consider a real vector space $W$ and let $W^\ast$ be its dual space. We define the $\CC_s$-module
	$$V:=(1,0)W \oplus (0,1)W^\ast.$$
	The region $V^\bullet$ of good points is $(1,0)(W\setminus 0) \oplus (0,1)(W^\ast \setminus 0)$ and its projectivization give us $\PP_{\CC_s}(V)=\PP(W) \times \PP(W^\ast)$. Note that a point $(1,0)p+(0,1)\phi$ in this space represents a point and a hyperplane on $\PP_\RR(W)$.  
	\end{exe}
	
	\begin{exe}\label{example dual number projective spaces} If we consider the dual numbers $\DD:= \RR + \ee \RR$, then $\PP_\DD^n$ is the tangent bundle of $\PP_\RR^n$. Indeed, consider a real vector space $W$ and the $\DD$-module $V:=W\oplus \ee W$. As shown in Proposition~\ref{tangent space without form}, the tangent space $T_{\pp} \PP_\RR(W)$ of $\PP_\RR(W)$ at the point $\pp$ can be identified with $\Lin_\RR(\RR p,W/{\RR p})$. Thus we obtain the diffeomorphism 
	\begin{align*} \PP_\DD(V) &\to T \PP_\RR(W)\\
				p+\ee v &\mapsto\varphi_{p+\ee v} 
	\end{align*}
	where $\varphi_{p+\ee v}: \RR p \to V/\RR p$ is the tangent vector at $\pp$ defined by $r p \mapsto r v+\RR p$. Observe that the map is well-defined: if $\zeta:=\alpha+\ee\beta\in\DD^\times$ then $\alpha\ne0$ and $\varphi_{\zeta(p+\ee v)}=\varphi_{\alpha p+(\alpha v+\beta p)\ee}=\varphi_{\alpha p+\alpha v\ee}$.
	
	This map can be better visualized if we endow $W$ with an inner product $\langle \cdot,\cdot \rangle$. In this case, $T_{\pp} \PP_\RR(W) = \Lin_\RR(\RR p, p^\perp)$ and we have the diffeomorphism
	\begin{align*} \PP_\FF(V) &\to T \PP_\RR(W)\\
		p+\ee v &\mapsto \frac{\langle-, p \rangle}{\langle p , p \rangle}\left(v - \frac{\langle v,p\rangle}{\langle p , p \rangle}p\right) 
	\end{align*}

	The dual number projective line is the tangent bundle of a circle, i.e., a cylinder.
	\end{exe}

	\begin{wrapfigure}[20]{r}{6. cm}
		\centering
		\vspace*{-.4cm}
		\includegraphics[scale=.3 ]{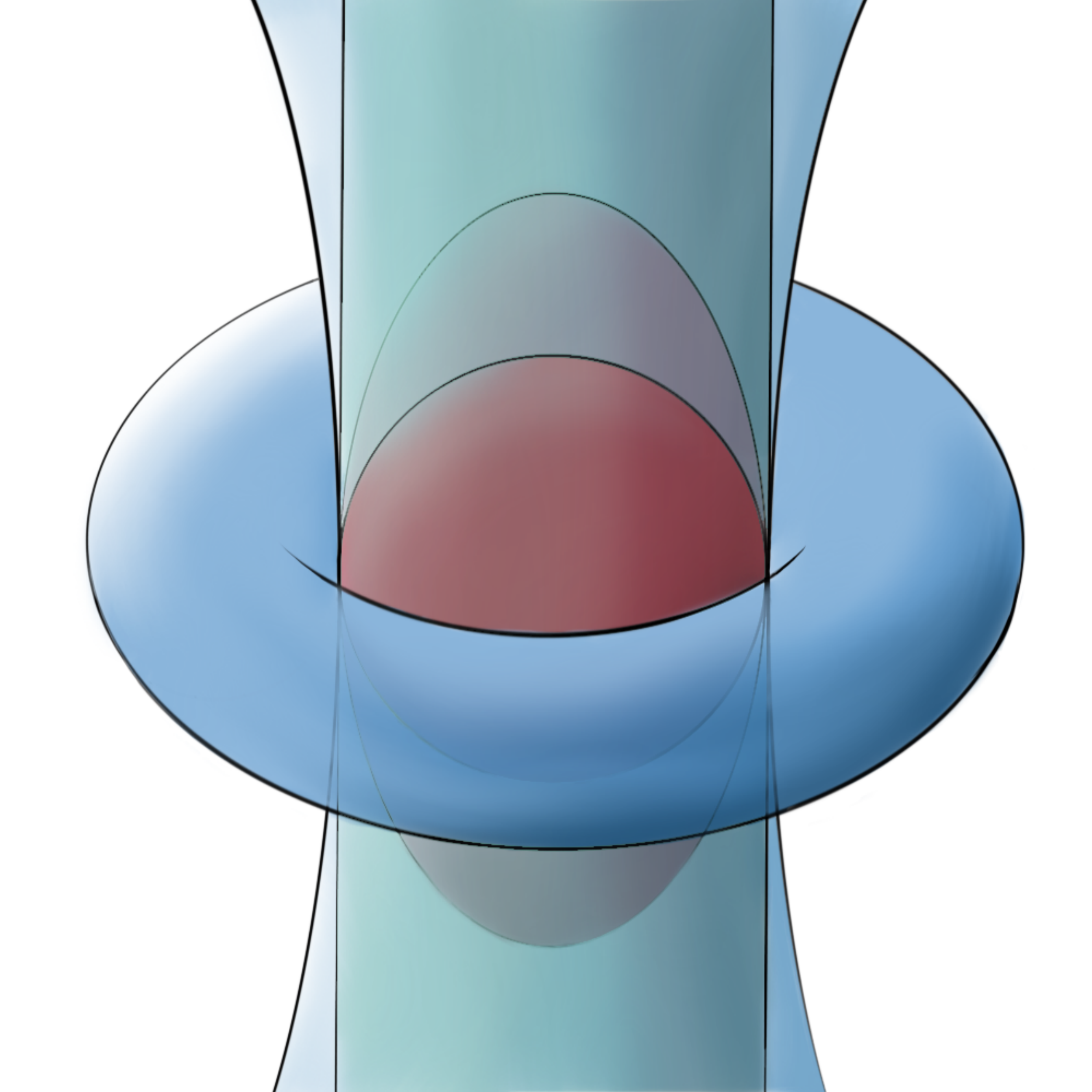}
		\caption{Transition between the split-complex (blue torus), dual number (green cylinder) and complex (red sphere) projective lines}
		\label{transtion}
	\end{wrapfigure}
	
	In the same way that $\RR$ and $\CC$ projective spaces can be embedded in quaternionic projective spaces ($\RR$ and $\CC$ are subalgebras of $\HH$), the projective spaces over $\RR,\CC,\DD,\CC_s$ can be embedded in the split-quaternionic projective space:
	 
	\begin{exe} \label{example split-quaternionic projective spaces}
	The projective space $\PP_{\HH_s}^n$ over the split-quaternions $\HH_s$ is an ambient space for the previously described geometries. Indeed, taking $t\in [0,1]$ and defining $\sigma(t) := (1-t)i+ t j \in \HH_s$, we obtain the one parameter family of subalgebras $\KK_t := \RR + \sigma(t) \RR$ of the split-quaternions. Note that 
		
	\[   
	\KK_t \simeq 
	\begin{cases}
		\RR+i \RR  &\quad\text{for} \quad 0 \leq t<1/2\\
		\RR + \ee \RR  &\quad\text{for} \quad t =1/2\\
		\RR + j \RR  &\quad\text{for} \quad 1/2 < t \leq 1\\ 
	\end{cases}
	\]
	because $\sigma(t)^2 = -(1-t)^2+t^2$. Observe that these algebras are $\CC$, $\DD$ and $\CC_s$, respectively. Thus, we have the following one parameter family of embeddings
	\begin{align*}
	\PP_{\KK_t}^n&\hookrightarrow \PP_{\HH_s}^n\\
	[z_0:\cdots:z_n] &\mapsto [z_0:\cdots:z_n]
	\end{align*}
	Therefore, there is a natural transition between the $\CC$, $\DD$ and $\CC_s$ projective geometries (see Figure \ref{transtion}).
	
	This transition of geometries is described in a different fashion in \cite{tre}.
	\end{exe}
	
	\begin{defi} Given finite dimension free $\FF$-modules $V_1$ and $V_2$, we define $\Lin_\FF(V_1,V_2)$ as the space of all real linear transformations $\phi:V_1 \to V_2$ satisfying $\phi(\alpha v) = \alpha \phi(v)$ for every $\alpha \in \FF$, $v \in V_1$. This space is $\RR$-linear in general and $\FF$-linear when $\FF$ is commutative.  
	\end{defi}
	
    The quotient $V/\FF p$ is free by Proposition \ref{good point gives a basis} and its dimension with respect to $\FF$ is $\dim_\FF V -1$. An element $\phi \in \Lin_\FF(\FF p, V/\FF p)$ is uniquely determined by $\phi(p)$, and thus the real dimension of $\Lin_\FF(\FF p, V/\FF p)$ is $\dim_\RR V - \dim_\RR \FF$.
	
	Given a map $\phi \in \Lin_{\FF}(\FF p, V/\FF p)$ we define a tangent vector $t_\phi \in T_{\pp} \PP(V)$ by the formula:
	$$t_\phi(f) = \frac{d}{d\varepsilon}\Big\vert_{\varepsilon =0} f(p+\varepsilon v),$$
	where $p$ is a representative of $\pp$ and $v$ is a representative of $\phi(p)$.
	Note that the above definition does not depend on the choice of  $v$: if $v,v' \in V $ satisfy $[v]= [v'] = \phi(p)$, then $v-v' = \alpha p$ for some $\alpha \in \FF$ and
	
	$$\frac{d}{d\varepsilon}\Big\vert_{\varepsilon =0} f(p+\varepsilon v) = \frac{d}{d\varepsilon}\Big\vert_{\varepsilon =0} f(p+\varepsilon v')+ \frac{d}{d\varepsilon}\Big\vert_{\varepsilon =0} f(p+\varepsilon \alpha p).$$
	Since for suficiently small $\varepsilon \neq 0$ the element $(1+\varepsilon \alpha)$ is a unit, we conclude that
	$f(p+\varepsilon \alpha p) = f(p)$. Therefore,
	$$\frac{d}{d\varepsilon}\Big\vert_{\varepsilon =0} f(p+\varepsilon v) = \frac{d}{d\varepsilon}\Big\vert_{\varepsilon =0} f(p+\varepsilon v').$$
	Finally, the definition does not depend on the choice of a representative of $\pp$. 
	So, $t_\phi \in T_{\pp} \PP(V)$.
	 
	\begin{prop}\label{tangent space without form} Let $\pp \in \PP(V)$. The map $t:\Lin_\FF(\FF p, V/\FF p) \to  T_{\pp} \PP(V)$ mapping $\phi$ to $t_\phi$ is an $\RR$-isomorphism.
	\end{prop}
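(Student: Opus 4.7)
The plan is to factor $t$ as a composition of two natural $\RR$-linear isomorphisms, each presenting a different incarnation of the quotient $V/\FF p$. Both spaces already have real dimension $\dim_\RR V-\dim_\RR\FF$ (as noted just before the statement), but I will not need this a priori; the naturality of the factorization will yield injectivity and surjectivity at once.

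First I would identify $T_{\pp}\PP(V)$ with $V/\FF p$. The map $\pi\colon V^\bullet\to\PP(V)$ is a principal $\FF^\times$-bundle by the previous proposition, hence a submersion, so its differential $d\pi_p\colon V=T_pV^\bullet\to T_{\pp}\PP(V)$ is surjective. Its kernel is the tangent space at $p$ to the fiber $\FF^\times p$; differentiating the curve $\varepsilon\mapsto(1+\varepsilon\alpha)p$ at $\varepsilon=0$ gives $\alpha p$, so this kernel equals $\FF p$. Therefore $d\pi_p$ descends to an $\RR$-linear isomorphism $\overline{d\pi_p}\colon V/\FF p\to T_{\pp}\PP(V)$.

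Next I would identify $\Lin_\FF(\FF p, V/\FF p)$ with $V/\FF p$ via evaluation at $p$: an $\FF$-linear map $\phi\colon\FF p\to V/\FF p$ is uniquely determined by $\phi(p)$, and every element of $V/\FF p$ is realized in this way, so $\phi\mapsto\phi(p)$ is an $\RR$-linear isomorphism.

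Finally I would check that $t$ equals the composition of these two isomorphisms. Using the corollary that $C^\infty(\PP(V))$ consists of the $\FF^\times$-invariant smooth functions on $V^\bullet$, for any representative $v$ of $\phi(p)\in V/\FF p$ one has
$$t_\phi(f)=\frac{d}{d\varepsilon}\Big\vert_{\varepsilon=0}f(p+\varepsilon v)=df_p(v)=df_{\pp}\bigl(d\pi_p(v)\bigr)=\overline{d\pi_p}([v])(f),$$
so $t_\phi=\overline{d\pi_p}(\phi(p))$. Being the composition of two $\RR$-linear isomorphisms, $t$ is itself an $\RR$-isomorphism. I do not anticipate any serious obstacle; the argument is essentially bookkeeping among three incarnations of $V/\FF p$, and the only mildly delicate point is the kernel computation $\ker d\pi_p=\FF p$, which follows mechanically from the principal-bundle structure.
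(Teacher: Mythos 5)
Your proposal is correct, and it takes a recognizably different route from the paper's. The paper first observes that both sides have real dimension $\dim_\RR V-\dim_\RR\FF$, and then proves only surjectivity of $t$: it takes an arbitrary smooth curve $\gamma$ through $\pp$, lifts it to $V^\bullet$, Taylor-expands the lift, and exhibits $\gamma'(0)$ as $t_\phi$ for $\phi(\alpha p)=[\alpha\tilde\gamma'(0)]$; injectivity then comes for free from the dimension count. You instead factor $t$ as the composition of two natural isomorphisms, $\Lin_\FF(\FF p,V/\FF p)\to V/\FF p$ (evaluation at $p$) and $\overline{d\pi_p}\colon V/\FF p\to T_\pp\PP(V)$, the second coming from the submersion property of the principal $\FF^\times$-bundle $\pi\colon V^\bullet\to\PP(V)$. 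Your version makes the identification $T_\pp\PP(V)\simeq V/\FF p$ explicit and localizes all the dimension bookkeeping into the kernel computation $\ker d\pi_p=\FF p$ (where you do still need that $\alpha\mapsto\alpha p$ is injective for $p\in V^\bullet$ — the same fact the paper uses tacitly when asserting that $\phi$ is determined by $\phi(p)$); the paper's version stays closer to the derivation-style definition of $t_\phi$, and its curve-lifting computation is essentially the one reused later in Lemma \ref{velocity curve}. Both arguments lean on the same input, the principal-bundle proposition, so the difference is one of packaging rather than substance — but yours is the cleaner abstract statement, while the paper's is the one whose intermediate computation pays dividends later.
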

	\begin{proof} Note that both spaces have the same real dimension $\dim_\RR V - \dim_\RR \FF$. We just have to prove that $t$ is surjective. Let $\gamma: \RR \to \PP_\FF(V)$ be a smooth curve, $\gamma(0)=\pp$. We lift $\gamma$ around $0$ to a map $\tilde \gamma$ with codomain $V^\bullet$ such that $\tilde \gamma(0)=p$. Hence,
	$$\gamma'(0)f = \frac{d}{d\varepsilon}\Big\vert_{\varepsilon =0} f(\tilde \gamma(\varepsilon)).$$
	Expanding in Taylor series, we have $\tilde\gamma(\varepsilon) = p + \varepsilon\tilde \gamma'(0) + o(\varepsilon)$ and, consequently,
	$$\gamma'(0)f= \frac{d}{d\varepsilon}\Big\vert_{\varepsilon =0} f(p + \varepsilon\tilde \gamma'(0)).$$
	Thus, defining $\phi: \FF p \to V/\FF p$ by the formula $\phi(\alpha p) = [\alpha \tilde \gamma'(0)]$ we conclude that
	$t_\phi = \gamma'(0)$.
	\end{proof}
	With the above proposition in mind, every time we write $T_\pp \PP(V)$ we mean $\Lin_\FF (\FF p, V/\FF p)$.

    \smallskip

	Now consider a Hermitian form $\langle \cdot,\cdot \rangle$ on $V$. The projective space $\PP(V)$ has two distinguished regions
	$$S(V):=\{ \pp \in \PP(V): \langle p,p \rangle =0\},$$
	$$R(V):=\{ \pp \in \PP(V): \langle p,p \rangle  \neq 0\}.$$
	The points of $S(V)$ are called {\bf singular} and, those of $R(V)$, {\bf regular}.

	\begin{prop} If $\pp \in R(V)$, then $$T_\pp \PP(V) \simeq \Lin_\FF(\FF p, p^\perp).$$
	\end{prop}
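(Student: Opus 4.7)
The plan is to combine the identification $T_\pp \PP(V)\simeq \Lin_\FF(\FF p, V/\FF p)$ from Proposition \ref{tangent space without form} with the orthogonal decomposition furnished by Proposition \ref{existence of p perp}. Concretely, since $\pp\in R(V)$ means $\langle p,p\rangle\neq 0$, Proposition \ref{existence of p perp} applies and gives a direct sum decomposition $V=\FF p\oplus p^\perp$ as $\FF$-modules, together with the information that $p^\perp$ is free. In particular $\Lin_\FF(\FF p, p^\perp)$ is defined in the sense of the definition appearing just before Proposition \ref{tangent space without form}.

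Next, I would observe that the decomposition $V=\FF p\oplus p^\perp$ yields a natural $\FF$-linear isomorphism
$$\iota:p^\perp\longrightarrow V/\FF p,\qquad v\mapsto v+\FF p,$$
namely the restriction of the canonical projection $V\to V/\FF p$ to $p^\perp$. Injectivity follows from $\FF p\cap p^\perp=0$, and surjectivity follows from the fact that every $w\in V$ decomposes uniquely as $w=\alpha p+v$ with $v\in p^\perp$, so that $w+\FF p=v+\FF p=\iota(v)$. Both $\FF$-linearity and well-definedness are routine.

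Post-composition with $\iota$ then gives an $\RR$-linear isomorphism
$$\iota_*:\Lin_\FF(\FF p, p^\perp)\longrightarrow \Lin_\FF(\FF p, V/\FF p),\qquad \phi\mapsto \iota\circ\phi,$$
with inverse $\psi\mapsto \iota^{-1}\circ\psi$. Composing with the identification of Proposition \ref{tangent space without form} yields the desired isomorphism $T_\pp\PP(V)\simeq \Lin_\FF(\FF p, p^\perp)$. I expect no genuine obstacle here: the only point that requires a moment of care is verifying that $p^\perp$ is free (so that $\Lin_\FF(\FF p, p^\perp)$ makes sense in the first place), which is precisely the content of Proposition \ref{existence of p perp}; everything else reduces to transporting $\Lin_\FF(\FF p,-)$ along the $\FF$-linear isomorphism $p^\perp\simeq V/\FF p$.
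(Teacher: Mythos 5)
Your argument is correct and is exactly the route the paper intends: the paper's proof is the one-line "Follows directly from Proposition \ref{existence of p perp}," and your write-up simply makes explicit the identification $p^\perp \simeq V/\FF p$ coming from the decomposition $V=\FF p\oplus p^\perp$, combined with Proposition \ref{tangent space without form}.
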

	\begin{proof} Follows directly from Proposition \ref{existence of p perp}.
    \end{proof}
	Whenever working with a tangent space at a regular point $\pp$ we will think of $T_\pp \PP(V)$ as $\Lin_\FF(\FF p, p^\perp)$.
	\begin{defi}\label{hermitian metric}
	The Hermitian form $\langle \cdot,\cdot \rangle$ induces a Hermitian metric on $R(V)$, defined by 
	$$\langle \phi, \psi  \rangle_\pp = \pm \frac{\langle \phi(p),\psi(p) \rangle}{\langle p,p \rangle}$$
	for $\phi,\psi \in T_\pp \PP(V)$. The sign is to be fixed conveniently. Associated to this Hermitian metric is the pseudo-Riemannian metric
	$$g_\pp(\phi, \psi) = \re \langle \phi,\psi \rangle_\pp,$$
	where $\re u:= (u+u^\ast)/2$.
	\end{defi}
	
	\begin{exe}\label{ex metrics} Let $V:=\FF^{n+1}$ be endowed with the Hermitian form
	$$\langle u,v \rangle := \sum_i u_i v_i^\ast.$$
	Consider the pseudo-Riemannian metric $g$ on the regular region $R(V)$ obtained from the Hermitian metric in Definition \ref{hermitian metric} with positive sign.
	
	For $\FF = \RR,\CC,\HH$, the metric $g$ is Riemannian, the usual Fubini-Study metric. 
	
	For split-algebras $\FF = \CC_s,\HH_s$, the metric $g$ is split, i.e., its signature has the same number of pluses and minuses.
	
	The regular region over dual numbers $\DD$ is the whole projective space, and the signature has $n$ pluses and $n$ zeros. The vectors $\phi$ parallel to the fibers of $\PP_\DD^n \to \PP_\RR^n$, $[u+\ee v] \mapsto [u]$, are the ones with null norm, i.e., $g(\phi,\phi)=0$.
	
	For projective lines, the metrics have the following signatures:

	\begin{center}
	\begin{tabular}{ l l }
		$\PP_\RR^1$ has signature $+$ & $\PP_{\DD}^1$ has signature $+0$  \\ 
		$\PP_\CC^1$ has signature $++$ & $\PP_{\CC_s}^1$ has signature $+-$  \\  
		$\PP_\HH^1$ has signature $++++$ & $\PP_{\HH_s}^1$ has signature $++--$     
	\end{tabular}
	\end{center}

	\end{exe}
	
	\begin{exe} The split-complex projective space (point-hyperplane geometry) arising from $V := (1,0)W \oplus (0,1)W$, as described in the Example \ref{example split-complex projective spaces}, has a natural geometry.
	
	Given two vectors $v_1:=(1,0)w_1 + (0,1) \varphi_1$ and $v_2=(1,0)w_2 + (0,1) \varphi_2$, we have the natural Hermitian form 
	$$\langle v_1,v_2 \rangle := (1,0)\phi_1(v_2)+(0,1)\phi_2(v_1).$$
	In particular, if $v:=(1,0)w + (0,1) \varphi$, then $\langle v,v \rangle = (1,1) \phi(v)$. Thus, the regular region describes the pair of points $[w]$ and hyperplanes $\varphi(x) =0$ such that the point is not in the hyperplane. Taking $W=\RR^{n+1}$ and identifying $W^\ast = W$ via the standard Euclidean metric, the metric associated to the above Hermitian form coincides with the one described in the Example \ref{ex metrics}. That is, the signature of $\PP_{\CC_s}(V)$ is split.
	\end{exe}
	
	\begin{exe}\label{example hyperbolic space} The $n$-dimensional real hyperbolic space $\HH_\RR^n$ is the ball $\{\pp \in \PP_\RR^n: \langle p,p \rangle < 0\}$, where $\langle \cdot, \cdot \rangle$ is the canonical real Hermitian form on $\RR^{n+1}$ with signature $-+ \cdots+$. The hyperbolic Hermitian metric is obtained from Definition \ref{hermitian metric} using the minus sign. The complex and quaternionic hyperbolic spaces are defined likewise.
	
	The region $\mathrm{dS}^n=\{\pp \in \PP_\RR^n: \langle p,p \rangle > 0\}$ with the metric defined above is the projectivization of the de Sitter space, and it is a Lorentz manifold. 
	\end{exe}
	
	Now, let us discuss vector fields. For a regular point $\pp$ we can think of $T_\pp \PP(V)$ as a subset of $\Lin(V, V)$, because $V=\FF p \oplus p^\perp$. More precisely, $T_\pp \PP(V)$ can be seen as the linear maps $\phi\in \Lin(V, V)$ such that $\phi(p) \in p^\perp$ and $\phi(p^\perp)=0$.

	\begin{defi}A vector field on an open subset $U$  of $R(V)$ is a smooth map $X: U \to \Lin(V,V)$ satisfying $X(\pp) \in T_\pp V$ for all $\pp \in U$. We denote the space of all vector fields by $\mathfrak X(U)$.
	\end{defi}

	Among the vector fields there are special ones called called {\bf spread vector fields}.
	Given $\pp \in R(V)$ we define the two projections
	$\pi'[\pp]: V \to \FF p$ and $\pi[\pp]: V \to p^\perp$ by
	$$\pi'[\pp]v = \frac{\langle v, p \rangle}{\langle p,p \rangle}p \quad \text{and} \quad \pi[\pp]v = v - \pi'[\pp]v.$$
    Both formulas are well defined because they do not depend on the choice of a representative $p$ of~$\pp$. 
	
	\begin{defi}\label{spread vector fields} A spread vector field $T$ is a vector field
	defined by 
		$$T_\qq := \pi[\qq]\circ t \circ \pi'[\qq]$$
	for a given $t\in \Lin(V,V)$. The vector field $T$ is said to be spread from $t$.
 	\end{defi}

	The importance of spread vector fields lies on the fact that if we have $\phi \in T_\pp \PP(V)$ for a regular~$\pp$, then the spread $\Phi$ from $\phi$ is a vector field satisfying $\phi = \Phi_\pp$ (in other words, we have a natural way to extend vectors to vector fields). Furthermore, calculating tensors is largely simplified by the use of spread vector fields; this is analogous to what happens in Lie groups when working with left-invariant vector fields.
	\section{Connection and geodesics}
	
	Following \cite{coordinatefree}, we give an algebraic description of the (pseudo-)Riemannian geometry on the previously discussed projective spaces.
	
	\subsection{Levi-Civita connection}
	 A vector field $X$ on the open set $U \subset R(V)$ is, in particular, a smooth map $X: U \to \Lin(V,V)$. We remind that the quotient map $\proj: V^\bullet \to \PP(V)$ defines a principal $\FF^\times$-bundle. For $\tilde U := \proj^{-1}U$ there is a smooth map $\tilde X: \tilde U \to \Lin(V,V)$ which is $\FF^\times$-invariant and satisfies $X(\pp) = \tilde X(p)$. Hence, we can always think of vector fields as $\FF^\times$-invariants smooth functions defined on $\FF^\times$-stable open subsets of $V^\bullet$.
	
	If $t \in T_\pp\PP(V)$, with $\pp \in U$, then
	$$dX(t)=\frac{d}{d\varepsilon}\Big\vert_{\varepsilon =0} X\big(p+\varepsilon t(p)\big).$$
	Note that this derivative does not depend on the choice of a representative $p$ for $\pp$.
	
	\begin{defi} The connection $\nabla$ defined on the vector fields of $R(V)$ is given by 
		$$\nabla_t X (\pp) = \left(\frac{d}{d\varepsilon}\Big\vert_{\varepsilon =0} X\big(p+\varepsilon t(p)\big) \right)_\pp,$$ 
	where $X$ is a vector field, $\pp$ is a point on the domain of $X$, and $t \in T_\pp\PP(V)$. 
	\end{defi} 
	Here we are using the notation $M_\pp := \pi[\pp] \circ M \circ \pi'[\pp]$, where $M \in \Lin(V,V)$.
	So, the connection is defined as the derivative of vector fields up to the projections necessary to ensure that $\nabla_t X (\pp)$ is in the tangent space at $\pp$.
	If $X,Y$ are vector fields, then $\nabla_Y X$ is the vector field $\pp \mapsto \nabla_{Y(\pp)} X$. It is easy to see that $\nabla$ is a connection.
    
    The facts/expressions in \cite{coordinatefree} involving the connection hold in the case of $\FF$-modules as well:
    
    \begin{defi} \label{defi-adjoint}
	Given $t: V \to V$ and $\pp \in R(V)$ define $t^\ast: V \to V$ by the formula
	$$t^\ast v:= \frac{\langle v, tp \rangle}{\langle p , p \rangle}p.$$
	We call this function the {\bf adjoint} of $t$.
	\end{defi}
	
	\begin{lemma} Given $\pp \in R(V)$ and $t \in T_\pp \PP(V)$ we have
		$$\langle tu,v \rangle = \langle u,t^\ast v \rangle.$$
	\end{lemma}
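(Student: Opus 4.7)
The plan is to use the orthogonal decomposition $V = \FF p \oplus p^\perp$ furnished by Proposition~\ref{existence of p perp}, together with the description (given just before Definition~\ref{spread vector fields}) of $t \in T_\pp\PP(V)$ as a linear endomorphism satisfying $t(p)\in p^\perp$ and $t(p^\perp)=0$.

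First I would write $u = \alpha p + u'$ with $u'\in p^\perp$. Pairing with $p$ shows $\alpha = \langle u,p\rangle/\langle p,p\rangle$, which is meaningful because $\langle p,p\rangle$ is a nonzero real number (hence invertible and central in $\FF$). Since $t$ is $\FF$-linear from $\FF p$ to $p^\perp$ and kills $p^\perp$, we have $tu = \alpha\,tp$, so by left linearity of the Hermitian form
$$\langle tu, v\rangle = \alpha\,\langle tp, v\rangle.$$

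For the other side, I would plug in the definition of $t^\ast$ from Definition~\ref{defi-adjoint} and then pull the scalar out of the second entry using the identity $\langle u, zw\rangle = \langle u, w\rangle z^\ast$, which is an immediate consequence of conjugate symmetry combined with left linearity. This produces
$$\langle u, t^\ast v\rangle = \langle u, p\rangle\,\Big(\tfrac{\langle v, tp\rangle}{\langle p,p\rangle}\Big)^{\!\ast}.$$
Because $\langle p,p\rangle\in\RR$ is self-adjoint and central in $\FF$, the conjugated scalar on the right simplifies to $\langle tp, v\rangle/\langle p,p\rangle$, and absorbing $\langle u,p\rangle/\langle p,p\rangle = \alpha$ recovers $\alpha\,\langle tp, v\rangle$. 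Matching the two expressions yields the claim.

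The only subtlety is that $\FF$ may be non-commutative (as for the split-quaternions), so one must be careful about the order of factors when invoking antilinearity in the second argument and when inverting or conjugating scalars. This concern is completely neutralized by the fact that $\langle p,p\rangle$ is real and therefore commutes with everything in $\FF$; no deeper obstacle is anticipated.
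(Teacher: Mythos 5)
Your proof is correct and follows essentially the same route as the paper: both arguments reduce $tu$ to $\frac{\langle u,p\rangle}{\langle p,p\rangle}\,tp$ (you via the decomposition $V=\FF p\oplus p^\perp$, the paper via $t=t\circ\pi'[\pp]$, which is the same observation) and then compute both sides down to $\frac{\langle u,p\rangle}{\langle p,p\rangle}\langle tp,v\rangle$, using that $\langle p,p\rangle$ is real and central. Your explicit attention to the antilinearity identity $\langle u,zw\rangle=\langle u,w\rangle z^\ast$ and to noncommutativity is a welcome clarification of a step the paper leaves implicit.
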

	\begin{proof} We can write $t = t \circ \pi'[\pp]$ because $t \in T_\pp \PP(V)$. Just note that
	$$\langle tu,v \rangle = \frac{\langle u, p \rangle}{\langle p,p \rangle}\langle  tp,v \rangle,\quad
	\langle u,t^\ast v \rangle = \langle  u,p \rangle\frac{\langle v, tp \rangle^\ast}{\langle p,p \rangle}.$$
	So, $\langle tu,v \rangle = \langle u,t^\ast v \rangle.$
	\end{proof}
	\begin{lemma}[see Lemma 4.2 \cite{coordinatefree}] 
	\label{derivative of projection} Let $t \in T_\pp \PP(V)$ with $\pp \in R(V)$. Then
		$$\frac{d}{d\varepsilon}\Big\vert_{\varepsilon = 0} \pi'[p+\varepsilon tp] =- \frac{d}{d\varepsilon}\Big\vert_{\varepsilon = 0} \pi[p+\varepsilon tp] = t+t^\ast.$$
	\end{lemma}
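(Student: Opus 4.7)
The second equality is essentially cosmetic: since $\pi[\qq] = \id_V - \pi'[\qq]$ for every regular $\qq$, differentiating gives $\frac{d}{d\varepsilon}|_{0}\pi[p+\varepsilon tp] = -\frac{d}{d\varepsilon}|_{0}\pi'[p+\varepsilon tp]$. So the real content is to show that the derivative of $\pi'[p+\varepsilon tp]$ at $\varepsilon=0$ equals $t + t^\ast$.

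The plan is a direct computation. Starting from the explicit formula
$$\pi'[p+\varepsilon tp]v = \frac{1}{\langle p+\varepsilon tp,\,p+\varepsilon tp\rangle}\,\langle v,\,p+\varepsilon tp\rangle\,(p+\varepsilon tp),$$
I would first note that the denominator is a real number (since $\langle w,w\rangle \in \RR$), so it commutes with everything and one can differentiate by the ordinary quotient rule without worrying about the non-commutativity of $\FF$. The numerator factors into a scalar in $\FF$ times a vector, and the product rule applied at $\varepsilon=0$ produces three contributions: one from differentiating the denominator (a scalar multiple of $\langle v,p\rangle p$), one from differentiating $\langle v,\cdot\rangle$ (giving $\langle v,tp\rangle p / \langle p,p\rangle$), and one from differentiating the final vector factor (giving $\langle v,p\rangle tp / \langle p,p\rangle$).

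The key simplification is that $t \in T_\pp\PP(V) = \Lin_\FF(\FF p, p^\perp)$ means $tp \in p^\perp$, so $\langle tp,p\rangle = 0$ and (via the involution) $\langle p,tp\rangle = 0$ as well. This kills the derivative of the denominator, leaving
$$\frac{d}{d\varepsilon}\Big|_{0}\pi'[p+\varepsilon tp]v \;=\; \frac{\langle v,tp\rangle}{\langle p,p\rangle}p \;+\; \frac{\langle v,p\rangle}{\langle p,p\rangle}tp.$$
On the other hand, by Definition \ref{defi-adjoint} the second term equals $\frac{\langle v,p\rangle}{\langle p,p\rangle} tp$; and since $t$ vanishes on $p^\perp$ and sends $p$ to $tp$, we have $tv = t(\pi'[\pp]v) = \frac{\langle v,p\rangle}{\langle p,p\rangle}tp$, so this second term is exactly $tv$. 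The first term is exactly $t^\ast v$ by Definition \ref{defi-adjoint}. Adding these gives $(t+t^\ast)v$, which is what we wanted.

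The main obstacle, such as it is, is purely bookkeeping: making sure the scalar/vector order on the left-module $V$ is respected when applying the product rule. Because the Hermitian form lands in $\FF$ and the denominator $\langle p,p\rangle$ is real, and because left multiplication by scalars of $\FF$ is what appears in every factor, no non-commutativity issue actually arises. There is also nothing to check about the representative chosen for $\pp$: both $\pi[\pp]$ and $\pi'[\pp]$ are manifestly invariant under $p \mapsto \alpha p$ with $\alpha \in \FF^\times$, so the derivative formula is as well.
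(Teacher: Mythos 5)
Your proof is correct and follows essentially the same route as the paper's: a direct expansion of $\pi'[p+\varepsilon tp]v$, using $\langle tp,p\rangle=0$ to kill the contribution of the denominator, and identifying the two surviving first-order terms with $t^\ast v$ and $tv$ via Definition \ref{defi-adjoint} and $t=t\circ\pi'[\pp]$. The only cosmetic difference is that the paper first simplifies the denominator to $\langle p,p\rangle+\varepsilon^2\langle tp,tp\rangle$ before expanding, whereas you invoke the quotient rule directly; both hinge on the same orthogonality fact.
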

	\begin{proof} The relation between the derivatives follows from $\pi[\xx]+\pi'[\xx] = \id$ for $\xx \in R(V)$. Now, note that
		$$\pi'[p+\varepsilon tp]v = \frac{\langle v, p+\varepsilon tp\rangle}{\langle p+\varepsilon tp,p+\varepsilon tp \rangle}(p+\varepsilon tp) = \frac{\langle v, p+\varepsilon tp\rangle}{\langle p,p \rangle+\varepsilon^2 \langle tp,tp \rangle}(p+\varepsilon tp).$$
		Since
		$$\frac{1}{\langle p,p \rangle+\varepsilon^2 \langle tp,tp \rangle} = \frac1{\langle p,p \rangle}+o(\varepsilon),$$
		$$\langle v, p+\varepsilon tp\rangle(p+\varepsilon tp) = \langle v,p \rangle p + \varepsilon\Big(\langle v,tp \rangle p + \langle v,p \rangle tp\Big)+ o(\varepsilon),$$
		and $$tv = \frac{\langle v,p \rangle }{\langle p,p \rangle}tp$$
		we conclude that
		$$\frac{d}{d\varepsilon}\Big\vert_{\varepsilon = 0} \pi'[p+\varepsilon tp] = t+t^\ast.$$
	\end{proof}

	The derivatives of a spread vector field with respect to a spread vector field is particularly simple:
	\begin{prop}[see Lemma 4.3 \cite{coordinatefree}] \label{covariant derivative of spread vector fields}Consider $t,s \in T_\pp \PP(V)$ with $\pp \in R(V)$. Let $T$ and $S$ be the vector fields spread from $t$ and $s$, respectively. Then
		$$\nabla_T S (\xx) = \left[ s\pi[\xx]t - t\pi'[\xx]s \right]_\xx.$$
	In particular, $\nabla_T S (\pp) =0$.
	\end{prop}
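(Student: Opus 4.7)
The plan is to unpack both sides at a given point $\xx\in R(V)$ and reduce to a direct application of Lemma \ref{derivative of projection}. Recall that $S_\yy=\pi[\yy]\circ s\circ\pi'[\yy]$ and that $\nabla_TS(\xx)$ is defined as $\bigl(\tfrac{d}{d\varepsilon}\big|_{\varepsilon=0}S(y_\varepsilon)\bigr)_\xx$, where $y_\varepsilon:=x+\varepsilon T_\xx(x)$. I would first apply the Leibniz rule to $\pi[y_\varepsilon]\circ s\circ\pi'[y_\varepsilon]$ and use Lemma \ref{derivative of projection} with the tangent vector $\tau:=T_\xx\in T_\xx\PP(V)$ (so that $y_\varepsilon = x+\varepsilon\tau(x)$) to obtain
\[
\tfrac{d}{d\varepsilon}\big|_{\varepsilon=0}S(y_\varepsilon)=-(\tau+\tau^\ast)\circ s\circ\pi'[\xx]+\pi[\xx]\circ s\circ(\tau+\tau^\ast).
\]

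The next step is to apply the bracket $(\cdot)_\xx=\pi[\xx]\circ\cdot\circ\pi'[\xx]$ and simplify using three projection identities: (a) $\pi[\xx]\circ\tau=\tau$ because $\tau$ has image in $x^\perp$; (b) $\pi[\xx]\circ\tau^\ast=0$ because the image of $\tau^\ast$ sits in $\FF x$; and (c) $\tau^\ast\circ\pi'[\xx]=0$, which follows from $\tau^\ast x=\tfrac{\langle x,\tau x\rangle}{\langle x,x\rangle}x=0$ since $\tau x\in x^\perp$. Together with $\pi[\xx]^2=\pi[\xx]$ and $\pi'[\xx]^2=\pi'[\xx]$, these identities collapse the four terms to $-\tau\circ s\circ\pi'[\xx]+\pi[\xx]\circ s\circ\tau\circ\pi'[\xx]$. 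Substituting $\tau=\pi[\xx]\circ t\circ\pi'[\xx]$ and absorbing the outer projections into the bracket notation recovers exactly $[s\pi[\xx]t-t\pi'[\xx]s]_\xx$.

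For the "in particular" clause, at $\xx=\pp$ the hypothesis $t,s\in T_\pp\PP(V)$ yields $t(p^\perp)=s(p^\perp)=0$ and $t(V),s(V)\subset p^\perp$; hence $s\pi[\pp]t = st = 0$ and $\pi'[\pp]s=0$, so both terms of the bracket vanish. I expect the main obstacle to be purely notational: keeping track of which projection kills which factor, and being careful that the adjoint $\tau^\ast$ in Definition \ref{defi-adjoint} is taken with respect to the basepoint $\xx$ (not the original $\pp$), so that Lemma \ref{derivative of projection} is applied with $\tau\in T_\xx\PP(V)$ playing the role there of an arbitrary tangent vector at $\xx$.
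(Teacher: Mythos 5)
Your proposal is correct and follows essentially the same route as the paper: lift $\nabla_TS(\xx)$ to the derivative of $\pi[y_\varepsilon]\circ s\circ\pi'[y_\varepsilon]$, apply Lemma \ref{derivative of projection} with $\tau=T_\xx$, kill the $\tau^\ast$ terms via $\pi[\xx]\circ T_\xx^\ast=T_\xx^\ast\circ\pi'[\xx]=0$, and substitute $T_\xx=\pi[\xx]\circ t\circ\pi'[\xx]$. Your explicit verification of the ``in particular'' clause (which the paper leaves implicit) is also correct.
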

	\begin{proof} Since $S(x+\varepsilon T_{\xx}x) = \pi[x+\varepsilon T_{\xx}x] \circ s \circ \pi'[x+\varepsilon T_{\xx}x]$, we have, by Lemma \ref{derivative of projection},
		$$\frac{d}{d\varepsilon} \Big \vert_{\varepsilon=0} S(x+\varepsilon T_{\xx}x) = -(T_\xx+T_\xx^\ast) \circ s  \circ \pi'[\xx]+ \pi[\xx] \circ s \circ (T_\xx+T_\xx^\ast).$$
	From $\pi[x]\circ T_\xx^\ast =  T_\xx^\ast \circ \pi'[x] = 0$ we obtain
	$$\left[\frac{d}{d\varepsilon} \Big \vert_{\varepsilon=0} S(x+\varepsilon T_{\xx}x)\right]_\xx = -T_\xx \circ s  \circ \pi'[\xx]+ \pi[\xx] \circ s \circ T_\xx$$
	and $T_\xx = \pi[\xx] \circ t \circ \pi'[\xx]$ implies
	$$\nabla_T S(\xx) =  -\pi[\xx]\circ t \circ\pi'[\xx]\circ s  \circ \pi'[\xx]+ \pi[\xx] \circ s \circ \pi[\xx] \circ t \circ \pi'[\xx]= \left[ s\pi[\xx]t - t\pi'[\xx]s \right]_\xx.$$
	\end{proof}
	\begin{prop} \label{spread vector fields commutator vanishes} Consider $t,s \in T_\pp \PP(V)$ with $\pp \in R(V)$. Let $T$ and $S$ be the vector fields spread from $t$ and $s$ respectively. Then $[T,S](\pp)=0$.
	\end{prop}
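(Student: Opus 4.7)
The plan is to evaluate $[T,S](\pp)$ directly as a derivation on $C^\infty(\PP(V))$, identified with $\FF^\times$-invariant smooth functions on $V^\bullet$, rather than invoking any torsion-freeness of $\nabla$ (which has not been established at this stage). Concretely, I would show that $[T,S](f)(p) = 0$ for every such $f$, which forces $[T,S](\pp) = 0$.

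First I observe that for any $\qq \in R(V)$ the spread vector $T_\qq = \pi[\qq]\,t\,\pi'[\qq]$ sends $q$ to $\pi[\qq]t(q)$, and $t(q)$ represents the same class as this in $V/\FF q$. By the definition of the tangent vector action given in Section~3, this gives
\[
T(f)(q) = \tfrac{d}{d\varepsilon}\big\vert_{\varepsilon=0} f\bigl(q+\varepsilon\,t(q)\bigr) = (Df)_q\bigl(t(q)\bigr),
\]
and likewise $S(f)(q) = (Df)_q\bigl(s(q)\bigr)$. Iterating this, since $t:V\to V$ is a fixed $\RR$-linear map, the chain rule applied to $\sigma \mapsto p+\sigma\,s(p)$ yields
\[
S(Tf)(p) = (Df)_p\bigl(t\,s(p)\bigr) + (D^2f)_p\bigl(s(p),\,t(p)\bigr),
\]
and symmetrically $T(Sf)(p) = (Df)_p\bigl(s\,t(p)\bigr) + (D^2f)_p\bigl(t(p),\,s(p)\bigr)$.

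The Hessian $(D^2 f)_p$ is symmetric in its two arguments, so the quadratic terms cancel and
\[
[T,S](f)(p) = (Df)_p\bigl(s\,t(p) - t\,s(p)\bigr).
\]
Finally, the assumptions $t,s \in T_\pp\PP(V)$ mean $t = \pi[\pp]\,t\,\pi'[\pp]$ and $s = \pi[\pp]\,s\,\pi'[\pp]$; hence $t(p)\in p^\perp$, while $s$ annihilates $p^\perp$ (because $\pi'[\pp]$ does). Thus $s\bigl(t(p)\bigr) = 0$, and similarly $t\bigl(s(p)\bigr) = 0$, so $[T,S](f)(p) = 0$ for every $f$, proving $[T,S](\pp) = 0$.

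The only step that demands care is the initial replacement of $\pi[\qq]t(q)$ by $t(q)$ as a direction of differentiation: the two differ by $\pi'[\qq]t(q) = \alpha q \in \FF q$, so for small $\varepsilon$ the curve $q + \varepsilon t(q)$ differs from $q + \varepsilon \pi[\qq]t(q)$ by the scalar factor $1+\varepsilon\alpha \in \FF^\times$, which the $\FF^\times$-invariant function $f$ does not see. This is exactly the well-definedness check already performed in Section~3 for $t_\phi$, so no new work is required.
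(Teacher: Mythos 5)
Your proof is correct, and it rests on the same two pillars as the paper's argument --- symmetry of the second derivative of $f$ and the identities $s\circ t=t\circ s=0$ for tangent vectors at $\pp$ viewed inside $\Lin(V,V)$ --- but it gets there by a noticeably cleaner route. The paper keeps the projections in place, writing $S_\xx(f)$ as the derivative of $f\bigl(x+\varepsilon\,\pi[\xx]s(x)\bigr)$, and must then invoke Lemma \ref{derivative of projection} to expand $\pi[p+\delta tp]\,s(p+\delta tp)$ to first order in $\delta$ before the expression becomes visibly symmetric under $(t,\delta)\leftrightarrow(s,\varepsilon)$ (and it still has to discard a correction term proportional to $p$, which the $\FF^\times$-invariance of $f$ kills). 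You instead exploit, at the level of functions rather than at a single point, the freedom to replace the representative $\pi[\qq]t(q)$ of the tangent direction by $t(q)$; since $t$ is a fixed linear map, $Tf(q)=(Df)_q\bigl(t(q)\bigr)$ and the second differentiation is ordinary calculus on the open set $V^\bullet\subset V$, with the Hessian terms cancelling by symmetry and the first-order terms vanishing because $t(p)\in p^\perp$ and $s$ annihilates $p^\perp$. What the paper's version buys is uniformity: Lemma \ref{derivative of projection} is the workhorse of the whole section (it also drives Proposition \ref{covariant derivative of spread vector fields}), so reusing it costs nothing there. What yours buys is self-containedness and transparency. The one point worth making explicit is that the identity $Tf(q)=(Df)_q\bigl(t(q)\bigr)$ must hold on a neighborhood of $\pp$, not merely at $\pp$, since you differentiate it again; but the well-definedness check from Section~3 is pointwise in $q$ and valid at every regular $\qq$, so this is indeed covered, as you note at the end.
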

	\begin{proof} Let $f \in C^\infty (\PP(V))$. We have
		$$S_\xx (f) = \frac{d}{d\varepsilon} \Big|_{\varepsilon=0} f\big(x + \varepsilon S_\xx(x)\big) = \frac{d}{d\varepsilon} \Big|_{\varepsilon=0} f\big(x + \varepsilon \pi[\xx]s(x)\big).$$
		So,
	 	$$T_\pp(Sf) = t(Sf) = \frac{d}{d\delta} \Big|_{\delta=0} \frac{d}{d\varepsilon} \Big|_{\varepsilon=0} f\big(p+\delta tp + \varepsilon \pi[p+\delta tp]s(p+\delta tp)\big).$$
	 It follows from Lemma \ref{derivative of projection} that
	 $$\pi[p+\delta tp]s(p+\delta tp) = \pi[\pp]sp + \delta( -(t+t^\ast)sp + \pi[\pp] stp ) + o(\delta) = sp - \delta t^\ast sp + o(\delta)$$
	 where we use that $t s = st =0$ and $\pi[\pp]sp = sp$ since $s,t \in T_\pp\PP(V)$. Hence,
	 $$T_\pp (Sf) = \frac{d}{d\delta} \Big|_{\delta=0} \frac{d}{d\varepsilon} \Big|_{\varepsilon=0} f(p+\delta tp+ \varepsilon sp),$$
	 which implies $T_\pp (Sf)=S_\pp (Tf)$.
	\end{proof}

	\begin{cor} \label{torsion free} The connection $\nabla$ is torsion free.
	\end{cor}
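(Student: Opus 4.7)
The plan is to use the tensoriality of torsion together with the two preceding propositions. Recall that the torsion tensor of $\nabla$ is defined by $\tau(X,Y) := \nabla_X Y - \nabla_Y X - [X,Y]$, and it is well known (and follows from the derivation properties of $\nabla$ and of the Lie bracket) that $\tau(X,Y)(\pp)$ depends only on the values $X(\pp)$ and $Y(\pp)$. Therefore, to prove $\tau=0$ it suffices to fix an arbitrary regular point $\pp \in R(V)$ and arbitrary tangent vectors $t, s \in T_\pp \PP(V)$, and verify $\tau(T,S)(\pp)=0$ for some vector field extensions $T,S$ of $t,s$.

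For these extensions I would take the spread vector fields $T$ and $S$ from $t$ and $s$, respectively, as introduced in Definition \ref{spread vector fields}. These are tailor-made for this computation because, by Proposition \ref{covariant derivative of spread vector fields},
\[
\nabla_T S(\pp) = 0 \quad\text{and}\quad \nabla_S T(\pp) = 0,
\]
and by Proposition \ref{spread vector fields commutator vanishes},
\[
[T,S](\pp) = 0.
\]
Combining these three identities immediately gives $\tau(T,S)(\pp) = 0$, which by the tensorial nature of $\tau$ and the arbitrariness of $\pp$, $t$, $s$ yields $\tau \equiv 0$.

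I do not foresee a genuine obstacle: the heavy lifting has already been done in the two preceding propositions. The only subtlety worth making explicit is that spread vector fields genuinely extend prescribed tangent vectors, i.e.\ $T(\pp)=t$ and $S(\pp)=s$, which is clear from Definition \ref{spread vector fields} since $\pi'[\pp]p = p$ and $\pi[\pp]$ is the identity on $p^\perp$, so that $\pi[\pp] \circ t \circ \pi'[\pp] = t$ for $t \in T_\pp\PP(V)$. With that remark in place, the proof reduces to the three-line argument above.
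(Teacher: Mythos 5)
Your proof is correct and is exactly the argument the paper intends: the paper's proof simply cites Propositions \ref{covariant derivative of spread vector fields} and \ref{spread vector fields commutator vanishes}, and you have filled in the standard tensoriality reduction and the verification that spread vector fields extend the prescribed tangent vectors. No issues.
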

	\begin{proof} Follows immediately from Propositions \ref{covariant derivative of spread vector fields} and \ref{spread vector fields commutator vanishes}. 
	\end{proof}
	\begin{cor}[see Proposition 4.4 \cite{coordinatefree}]\label{compatibility with the kmetric} The connection $\nabla$ is compatible with the Hermitian metric.
	\end{cor}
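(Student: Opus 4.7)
The idea is to reduce compatibility to a pointwise computation with spread vector fields. Given arbitrary vector fields $X, Y, Z$, writing them locally as $\RR$-linear combinations of spread vector fields with smooth coefficients and applying the Leibniz rule on both sides of
$$Z\langle X, Y\rangle = \langle \nabla_Z X, Y\rangle + \langle X, \nabla_Z Y\rangle$$
reduces the problem to verifying this identity at an arbitrary regular point $\pp$ when $T, S, R$ are vector fields spread from $t, s, r \in T_\pp\PP(V)$ (viewed as endomorphisms of $V$ of the form $\pi[\pp]\,m\,\pi'[\pp]$).

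Under this reduction, Proposition \ref{covariant derivative of spread vector fields} yields $\nabla_T S(\pp) = \nabla_T R(\pp) = 0$, so the right-hand side vanishes at $\pp$. It therefore suffices to show $T_\pp\langle S, R\rangle = 0$. I would lift the curve of $T$ via $x(\varepsilon) := p + \varepsilon tp$ and compute
$$T_\pp\langle S, R\rangle = \pm\frac{d}{d\varepsilon}\bigg|_{\varepsilon=0}\frac{\langle \pi[x(\varepsilon)]\, s\, x(\varepsilon),\; \pi[x(\varepsilon)]\, r\, x(\varepsilon)\rangle}{\langle x(\varepsilon), x(\varepsilon)\rangle},$$
differentiating with the help of Lemma \ref{derivative of projection} for $\frac{d}{d\varepsilon}\pi[x(\varepsilon)]$.

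The computation then collapses via three orthogonality facts at $\pp$: the vectors $sp$, $rp$, and $tp$ all lie in $p^\perp$; the maps $s$ and $r$ annihilate $p^\perp$, so that $s(tp) = r(tp) = 0$; and $\langle tp, p\rangle = 0$, which makes the denominator stationary at $\varepsilon = 0$. With these, every surviving term in the derivative of the numerator takes the form $\langle \lambda p, rp\rangle$ or $\langle sp, \mu p\rangle$ with $\lambda,\mu\in\FF$, and these all vanish because $sp, rp \perp p$. The main obstacle is just keeping track of the cross-terms produced by differentiating both projections and both ``$s x(\varepsilon)$'' factors; all of the conceptual content is already packaged in Proposition \ref{covariant derivative of spread vector fields} and Lemma \ref{derivative of projection}.
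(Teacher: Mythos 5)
Your proposal is correct and follows essentially the same route as the paper: reduce to spread vector fields by tensoriality, kill the two covariant-derivative terms at $\pp$ via Proposition \ref{covariant derivative of spread vector fields}, and then show the remaining directional derivative of the Hermitian pairing vanishes because the denominator is stationary and every term surviving the differentiation of the numerator pairs a multiple of $p$ against a vector in $p^\perp$. The only cosmetic difference is that the paper phrases the reduction by introducing the tensor $B:=\nabla\langle-,-\rangle$ rather than expanding in a local frame of spread fields, but the content is identical.
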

	\begin{proof}Consider the tensor $B:=\nabla\langle -,- \rangle$, i.e., $B(S,T_1,T_2) := S \langle T_1,T_2 \rangle - \langle \nabla_S T_1, T_2 \rangle - \langle  T_1, \nabla_S T_2 \rangle$. Let $\pp \in R(V)$, let $t_1,t_2,s \in T_\pp \PP(V)$, and let $S,T_1,T_2$ be the vector fields spread respectively from $s,t_1,t_2$. By Proposition \ref{covariant derivative of spread vector fields} we have
		$$B(s,t_1,t_2) = s\langle T_1,T_2 \rangle.$$
	 Fix a representative $p$ for $\pp$ and define $u:=sp,\, v_1:=t_1p,\,v_2:=t_2p$. 
		 
		\begin{align*}s\langle T_1,T_2\rangle &= \left.\frac{d}{d\varepsilon}\right|_{\varepsilon =0} \langle {(T_1)}_{p+\varepsilon u}, {(T_2)}_{p+\varepsilon u} \rangle 
			\\&= \left.\frac{d}{d\varepsilon}\right|_{\varepsilon =0} \frac{\langle {(T_1)}_{p+\varepsilon u}(p+\varepsilon u),{(T_2)}_{p+\varepsilon u}(p+\varepsilon u) \rangle}{\langle p+ \varepsilon u,p+ \varepsilon u\rangle}.
		\end{align*}
		
		Since $\langle p , u \rangle =0$, we have $\langle p+ \varepsilon u,p+ \varepsilon u\rangle^{-1} = \langle p,p\rangle^{-1}+o(\varepsilon)$, and therefore
		$$s\langle T_1,T_2\rangle =\frac1{\langle p,p\rangle} \left.\frac{d}{d\varepsilon}\right|_{\varepsilon =0} \langle {(T_1)}_{p+\varepsilon u}(p+\varepsilon u),{(T_2)}_{p+\varepsilon u}(p+\varepsilon u) \rangle.$$
		
 		Now,
		\begin{align*}
			{(T_i)}_{\,p+\varepsilon u}(p+\varepsilon u)&=\pi[p+\varepsilon u]t_i\pi'[p+\varepsilon u](p+\varepsilon u)
			\\&=\pi[p+\varepsilon u]v_i
			\\&= v_i - \frac{\langle v_i,p+\varepsilon u \rangle}{\langle p+\varepsilon u,p+\varepsilon u\rangle}(p+\varepsilon u)
			\\&= v_i - \varepsilon  \frac{\langle v_i,u \rangle}{\langle p,p\rangle}p + o(\varepsilon),
		\end{align*}
		where we use that $tu=0$ and $\langle v_i,p \rangle =0$. Hence,
		$$\left.\frac{d}{d\varepsilon}\right|_{\varepsilon =0}{(T_i)}_{\,p+\varepsilon u}(p+\varepsilon u) =-\frac{\langle v_i,u \rangle}{\langle p,p\rangle}p $$
		and we obtain
		\begin{align*}
			s\langle T_1,T_2\rangle &=  - \frac{1}{\langle p ,p \rangle^2}\left(\langle v_1,u \rangle\langle p,v_2 \rangle+{\langle v_2,u \rangle^\ast}\langle v_1,p \rangle \right)=0,
		\end{align*}
		because $\langle v_i,p \rangle =0$. Thus, $B(s,t_1,t_2)=0$.
	\end{proof}

	Corollaries \ref{torsion free} and \ref{compatibility with the kmetric} say that the Hermitian and the pseudo-Riemannian metric are the Levi-Civita ones.
	
	\subsection{Geodesics}
	
	The geodesics in $\PP(V)$, as we will see in Proposition \ref{proposition: geodesics}, are of linear nature (analogous to the geodesics on a sphere).

	\begin{defi} \label{defi kgeodesic} Consider a $2$-dimensional real subspace $W$ of $V$ such that the restriction to $W$ of the Hermitian form $\langle \cdot,\cdot \rangle$ is $\RR$-valued and non-null. We call the projectivization $\PP_\FF(W)$ a {\bf geodesic}, where by $\PP_\FF(W)$ we mean the image of $W\cap V^\bullet$ under the quotient map $V^\bullet \to \PP_\FF(V)$.
	\end{defi}

		\begin{prop} The natural map $\phi:\PP_\RR (W\cap V^\bullet) \to \PP_\FF(V)$ is an immersion and its image is $\PP_\FF(W)$. Furthermore, if $\pp \in \PP_\FF (W)$ is regular, then 
		$$T_\pp \PP_\FF (W) = \{t \in T_\pp \PP_\FF (V): t(p) \in W\},$$
		where $p\in W$ is a representative of $\pp$.
	\end{prop}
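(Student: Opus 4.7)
The plan is to unfold the definitions and reduce everything to one algebraic fact about $W$. The image statement is immediate: by construction $\phi([p]_\RR):=[p]_\FF$ is well-defined since $\RR^\times\subset\FF^\times$, and its image is $\PP_\FF(W)$ by the definition of the latter.

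To check that $d\phi$ is injective at $[p]_\RR$, I would compute it using the Taylor-series description of tangent vectors from the proof of Proposition \ref{tangent space without form}. A vector $\psi\in\Lin_\RR(\RR p, W/\RR p)=T_{[p]_\RR}\PP_\RR(W\cap V^\bullet)$ determined by $\psi(p)=v+\RR p$ (for some $v\in W$) lifts to the curve $\varepsilon\mapsto p+\varepsilon v$; its image $d\phi(\psi)\in\Lin_\FF(\FF p, V/\FF p)$ is the vector sending $p$ to $v+\FF p$. Hence $\ker d\phi\cong(W\cap\FF p)/\RR p$, and the immersion claim reduces to proving the equality $W\cap\FF p=\RR p$ for every $p\in W\cap V^\bullet$.

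Establishing this equality is the main obstacle. I would take a real basis $\{p,q\}$ of $W$ and assume, for contradiction, that $\alpha p=\beta p+\gamma q\in W$ with $\alpha\in\FF\setminus\RR$ and $\beta,\gamma\in\RR$. If $\gamma\ne 0$, then $q=\delta p$ where $\delta:=\gamma^{-1}(\alpha-\beta)\in\FF\setminus\RR$. The $\RR$-valuedness of $\langle\cdot,\cdot\rangle$ on $W$ forces $\langle p,p\rangle\delta^{\ast}=\langle p,\delta p\rangle\in\RR$; since only real elements of $\FF$ are self-adjoint, this requires $\langle p,p\rangle=0$. But then all four pairings $\langle p,p\rangle$, $\langle p,q\rangle$, $\langle q,p\rangle$, $\langle q,q\rangle$ vanish, contradicting non-nullness of the restricted form. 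Hence $\gamma=0$, giving $(\alpha-\beta)p=0$; using that $p$ is good (so some $\sum_i p_i\xi_i=1$) I conclude $\alpha=\beta\in\RR$.

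For the tangent-space description at a regular $\pp$, I would use the model $T_\pp\PP_\FF(V)=\Lin_\FF(\FF p,p^\perp)$. Under the computation of $d\phi$ above, $d\phi(\psi)$ evaluated at $p$ equals $\pi[\pp]v = v - \frac{\langle v,p\rangle}{\langle p,p\rangle}p$ for some representative $v\in W$; since $v,p\in W$, $\langle v,p\rangle\in\RR$ and $\langle p,p\rangle\in\RR^\times$, this lies in $W\cap p^\perp$. Conversely, any $w\in W\cap p^\perp$ is realized by taking $v:=w$, since then $\pi[\pp]w=w$. Thus the image of $d\phi$, evaluated at $p$, is precisely $W\cap p^\perp$, which coincides with $\{t\in T_\pp\PP_\FF(V):t(p)\in W\}$ because $t(p)\in p^\perp$ is automatic in this model.
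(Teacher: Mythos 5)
Your argument is correct and follows essentially the same route as the paper: the heart of both proofs is showing that $kp\in W$ with $k\in\FF$ forces $k\in\RR$, handled by the same dichotomy ($\RR$-valuedness of the restricted form when $\langle p,p\rangle\neq 0$, non-nullness when $\langle p,p\rangle=0$), and you additionally spell out the tangent-space identification for the ``furthermore'' clause that the paper leaves implicit. The only point you gloss over is the smoothness of $\phi$ itself (needed before one can speak of $d\phi$), which the paper obtains from the commutative square relating $\phi$ to the two quotient maps.
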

	\begin{proof} Since the form restricted to $W$ is real and non-null, there exists $p\in W$ with $\langle p,p\rangle\ne0$. Consider an orthonormal basis $p,q$ for $W$. Observe that $\RR q$ contains the only points in $W$ which can be non-good since, for every $x\in W\setminus\RR q$ we have $\langle p,x\rangle\ne0$. So, 
	$W\cap V^\bullet$ is either $W\setminus\{0\}$ or $W \setminus \RR q$. Clearly, the image of $\phi$ is $\PP_\FF(W)$.
		
		We now prove that $\phi$ is injective. If $[\alpha p + \beta q] =[\alpha'p +\beta'q]$ in $\PP_\FF(W)$, where the coefficients $\alpha,\beta,\alpha',\beta'\in\RR$, then there exists $\gamma \in \FF^\times$ such that $\alpha p + \beta q =\gamma(\alpha'p +\beta'q)$. If $W\cap V^\bullet=W\setminus\{0\}$, then $p,q$ are $\FF$-linearly independent and it follows that $\alpha = \gamma \alpha'$ and $\beta = \gamma \beta'$.  If $W\cap V^\bullet=W \setminus \RR q$, then $\alpha \neq 0$ and, consequently, $\alpha \langle p,p \rangle = \gamma\alpha'\langle p,p \rangle$, implying that $\gamma \in \RR$. Thus  $[\alpha p + \beta q] =[\alpha'p +\beta'q]$ in $\PP_\RR(W \cap V^\bullet)$.
		
		The smoothness of $\phi$ follows from the commutative diagram bellow
		$$
		\begin{tikzcd}
			W \cap V^\bullet \arrow[d, "\mathrm{proj}_W"'] \arrow[r, "i", hook] & V^\bullet \arrow[d, "\mathrm{proj}_V"] \\
			\mathbb P_{\mathbb R}(W \cap V^\bullet) \arrow[r, "\phi"]                   & \mathbb P_{\mathbb F}(V)              
		\end{tikzcd}
		$$
		because the natural map $\proj_V \circ i:W \cap V^\bullet \to \PP_\FF(V)$ is smooth and $\RR^\times$-invariant. Let us show that $\phi$ is an immersion. If $v$ is a tangent vector at $\xx \in \PP_\RR (W \cap V^\bullet)$, then there is a vector $\tilde v \in W$ tangent to $W$ at $x$ such that $\de \proj_W(\tilde v)=v$. The image of $\tilde v$ by $\de i$ is $\tilde v \in V$ itself, and the image of this vector by $\de \proj_V$ is $\de \phi(v)$. If $\de \phi(v)=0$, then $\tilde v$ is tangent to the fiber of $\proj_V$ at $x$, which means $\tilde v = k x$ for some $k \in \FF$. It remains to show that $k\in\RR$ since this implies that $v = \de \proj_W(\tilde v)=0$. If $\langle x,x\rangle\ne0$, it follows from $\langle\tilde v,x\rangle=k\langle x,x\rangle$ that $k\in\RR$. Otherwise, assume $\langle x,x\rangle=0$ and $k\notin\RR$. Then $x,y:=\tilde v-x$ is a basis for $W$ satisfying $\langle x,x\rangle=\langle y,y\rangle=\langle x,y\rangle=0$, a contradiction. 
	\end{proof}
	
	In order to prove that (the regular parts of) the geodesics introduced in Definition \ref{defi kgeodesic} coincide with the geodesics of the Levi-Civita connection we will use a distinguished vector field introduced bellow.
	
	The {\bf tance } between two regular points $\pp,\qq$ is defined by
	$$\ta(\pp,\qq) := \frac{\langle p,q \rangle \langle q,p \rangle}{\langle p,p \rangle \langle q,q \rangle}.$$ 
	Clearly, the tance between two points is always a real number.
	
	Let $t \in T_\pp \PP(V)$, where $\pp \in R(V)$. We define the vector field $\tn(t)$  by the formula 
	$$\tn(t)(\xx):= \frac{T_\xx}{\ta(\pp,\xx)},$$
	where $T$ is the spread vector field from $t$. Note that $\tn(t)$ is a smooth vector field defined on the region described by $\ta(\pp,\xx) \neq 0$.
	The tance and the vector field $\tn(t)$ are extensions of the corresponding concepts introduced in \cite{discbundles} and \cite{coordinatefree}, respectively.
	
	Let us show that the integral curve of $\tn(t)$ starting at $\pp$ is the geodesic passing through $\pp$ with velocity $t$. We need the following lemma:
	
	\begin{lemma}[see Lemma 5.3 \cite{coordinatefree}] Let $\pp \in R(V)$ and $t \in T_\pp \PP(V)$. If $T$ is the spread vector field from $t$, then
		$$T_\xx \ta(\pp,\cdot) = -2 \ta(\pp,\xx) \re \frac{\langle tx,x \rangle}{\langle x,x \rangle}$$
	for all $\xx$ satisfying $\ta(\pp,\xx) \neq 0$.
	\end{lemma}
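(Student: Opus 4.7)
The plan is to apply the definition of the directional derivative directly: for any smooth function $f$ on $\PP(V)$ and $v\in T_\xx\PP(V)$, one has $v(f)=\frac{d}{d\varepsilon}\big|_{\varepsilon=0} f(x+\varepsilon v(x))$. Setting $w:=T_\xx(x)=\pi[\xx]\,t\,x$ and $f=\ta(\pp,\cdot)$, I would differentiate the rational expression $\ta(\pp, x+\varepsilon w)=\frac{\langle p,x+\varepsilon w\rangle\langle x+\varepsilon w,p\rangle}{\langle p,p\rangle\langle x+\varepsilon w,x+\varepsilon w\rangle}$ at $\varepsilon=0$ via the product/quotient rule.

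Two orthogonality facts reduce this to almost nothing. First, the image of $\pi[\xx]$ lies in $x^\perp$, so $\langle w,x\rangle=0$, which makes the derivative of the denominator vanish at $\varepsilon=0$. Thus only the numerator contributes, giving
$$T_\xx \ta(\pp,\cdot)=\frac{\langle p,w\rangle\langle x,p\rangle+\langle p,x\rangle\langle w,p\rangle}{\langle p,p\rangle\langle x,x\rangle}.$$
Second, since $t\in T_\pp\PP(V)$ vanishes on $p^\perp$ and sends $p$ into $p^\perp$, decomposing $x=\frac{\langle x,p\rangle}{\langle p,p\rangle}p+\pi[\pp]x$ yields $tx=\frac{\langle x,p\rangle}{\langle p,p\rangle}tp$, and in particular $\langle tx,p\rangle=0=\langle p,tx\rangle$. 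Combined with $w=tx-\frac{\langle tx,x\rangle}{\langle x,x\rangle}x$, this produces
$$\langle w,p\rangle=-\frac{\langle tx,x\rangle}{\langle x,x\rangle}\langle x,p\rangle,\qquad \langle p,w\rangle=-\langle p,x\rangle\frac{\langle x,tx\rangle}{\langle x,x\rangle},$$
by taking the adjoint of the first identity.

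Substituting these into the formula above and pulling out the real scalar $\langle x,x\rangle$ from the denominator leaves the numerator
$-\langle p,x\rangle\,\langle x,tx\rangle\,\langle x,p\rangle-\langle p,x\rangle\,\langle tx,x\rangle\,\langle x,p\rangle=-\langle p,x\rangle\,\bigl(\langle x,tx\rangle+\langle tx,x\rangle\bigr)\,\langle x,p\rangle$. Here lies the only subtle point: $\FF$ is not commutative in general, so the order of the three factors matters. The rescue is that $\langle x,tx\rangle+\langle tx,x\rangle=2\re\langle tx,x\rangle\in\RR$ is real and therefore central, which lets me collapse the expression to $-2\re\langle tx,x\rangle\cdot\langle p,x\rangle\langle x,p\rangle$. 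Dividing by $\langle p,p\rangle\langle x,x\rangle^2$ and regrouping the real factor $\frac{2\re\langle tx,x\rangle}{\langle x,x\rangle}$ against $\ta(\pp,\xx)=\frac{\langle p,x\rangle\langle x,p\rangle}{\langle p,p\rangle\langle x,x\rangle}$ gives the stated formula. The main (mild) obstacle, then, is precisely this bookkeeping of non-commutativity; once one exploits the centrality of the real part, the computation is immediate.
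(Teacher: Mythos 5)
Your proof is correct and follows essentially the same route as the paper's: both differentiate $\ta(\pp,x+\varepsilon w)$ directly with $w=T_\xx x$, use $\langle w,x\rangle=0$ to kill the denominator's contribution, reduce $\langle w,p\rangle$ to $-\tfrac{\langle tx,x\rangle}{\langle x,x\rangle}\langle x,p\rangle$ via $tx\in p^\perp$, and absorb the non-commutativity by the centrality of real (self-adjoint) elements. Your treatment is just slightly more explicit about that last bookkeeping step, which the paper compresses into the $2\,\re$ notation.
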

	\begin{proof} Let $\xi = T_\xx x$ for some representative $x$. By definition
		$$T_\xx \ta(\pp,\cdot) = \frac{\de}{\de \varepsilon}\Big|_{\varepsilon=0}\frac{\langle p, x+\varepsilon \xi \rangle\langle x+\varepsilon \xi,p \rangle}{\langle p, p \rangle\langle x+\varepsilon \xi,x+\varepsilon \xi \rangle}=2\re \frac{\langle p,x \rangle\langle \xi,p \rangle}{\langle p,p \rangle \langle x,x \rangle}.$$ 
	Since
	$$\xi = T_\xx x = \pi[\xx] tx = tx - \frac{\langle tx,x \rangle }{ \langle x,x \rangle}x$$
	and $tx \in p^\perp$ we obtain
	$$\langle \xi,p \rangle = -\langle tx,x \rangle \frac{\langle x,p \rangle}{\langle x,x \rangle},$$
	concluding the proof.
	\end{proof}
	\begin{prop}[see Thm 5.4 \cite{coordinatefree}] \label{proposition: geodesics} Let $t \in T_\pp \PP(V)$, $t\ne0$, with $\pp \in R(V)$. Consider the geodesic $\PP_\FF(W)$, where $W = \RR p+\RR tp$ {\rm(}note that $\PP_\FF(W)$ does not depend on the choice of representative $p$ for $\pp${\rm)}. 
	Let $c$ be a curve on $R(V)\cap \PP_\FF(W)$ satisfying
		$$c'(\theta) = \tn(t)_{c(\theta)}, \quad c(0) =\pp\quad \text{and}\quad c'(0) = t.$$
	The curve $c$ is the geodesic of the Levi-Civita connection passing through $\pp$ with velocity $t$.
	\end{prop}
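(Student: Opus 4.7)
The plan is to verify directly that $\nabla_{c'}c'=0$ along $c$ by reducing the computation to the spread vector field $T$ generated by $t$ and then exploiting the reality of $\langle\cdot,\cdot\rangle$ on $W$.

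First I would check that $\tn(t)$ is tangent to $\PP_\FF(W)$ at every point of $\PP_\FF(W)\cap R(V)$, so that the integral curve $c$ is well-defined on this submanifold. If $\xx\in\PP_\FF(W)\cap R(V)$ has a representative $x=\alpha p+\beta\,tp\in W$ with $\alpha,\beta\in\RR$, then $tx=\alpha\,tp\in W$ (using $\FF$-linearity of $t$ together with $\mathrm{Im}(t)\subset p^\perp$ and $t|_{p^\perp}=0$). Since $\langle tx,x\rangle\in\RR$ (the form restricted to $W$ is real), the vector $T_\xx(x)=\pi[\xx](tx)=tx-\langle tx,x\rangle\langle x,x\rangle^{-1}x$ again lies in $W$. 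Dividing by the real scalar $\ta(\pp,\xx)$ preserves $W$, so $\tn(t)$ is indeed tangent to $\PP_\FF(W)$. Local ODE theory, with $\ta(\pp,\pp)=1$, yields $c$. One checks that the prescribed initial velocity is automatic: $\tn(t)_\pp=T_\pp/1=t$, because $T_\pp=\pi[\pp]\circ t\circ\pi'[\pp]=t$.

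For the vanishing of the covariant derivative, I would extend $c'$ to the vector field $\tn(t)=T/\ta(\pp,\cdot)$ and apply Leibniz:
$$\nabla_{\tn(t)}\tn(t)=\frac{1}{\ta(\pp,\cdot)^{2}}\nabla_{T}T-\frac{T\bigl(\ta(\pp,\cdot)\bigr)}{\ta(\pp,\cdot)^{3}}\,T.$$
Proposition \ref{covariant derivative of spread vector fields} with $s=t$, combined with $t\circ t=0$ (immediate since $\mathrm{Im}(t)\subset p^\perp\subset\ker t$), yields after a short calculation
$$\nabla_{T}T(\xx)\cdot x=-2\,\frac{\langle tx,x\rangle}{\langle x,x\rangle}\,T_\xx(x).$$
Combining this with the preceding lemma $T_\xx\bigl(\ta(\pp,\cdot)\bigr)=-2\,\ta(\pp,\xx)\,\re\bigl(\langle tx,x\rangle/\langle x,x\rangle\bigr)$ and using $z-\re z=(z-z^\ast)/2$, the two contributions assemble into
$$\nabla_{\tn(t)}\tn(t)(\xx)\cdot x=\frac{-1}{\ta(\pp,\xx)^{2}\langle x,x\rangle}\bigl(\langle tx,x\rangle-\langle x,tx\rangle\bigr)T_\xx(x).$$
Along $c$, the representative $x(\theta)$ lies in $W$, so $tx\in W$ as well, and the reality of $\langle\cdot,\cdot\rangle|_W$ forces $\langle tx,x\rangle=\langle x,tx\rangle$. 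Therefore $\nabla_{c'}c'=0$.

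The main obstacle I expect is the algebraic bookkeeping inside $\nabla_{T}T$: one must carefully track the projections $\pi[\xx],\pi'[\xx]$ with $\FF$-scalar coefficients and invoke $t\circ t=0$ to collapse $t\pi[\xx]t-t\pi'[\xx]t$ to $-2\,t\pi'[\xx]t$ when evaluated at a representative $x$. Once that computation is in hand, the entire proof rests on a single structural fact, namely that $\langle tx,x\rangle$ is real along the curve, which is precisely the defining property of the geodesic $\PP_\FF(W)$.
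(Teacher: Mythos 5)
Your proof is correct and follows essentially the same route as the paper's: write $\tn(t)=\ta(\pp,\cdot)^{-1}T$, apply the Leibniz rule, evaluate $T_\xx\ta(\pp,\cdot)$ via the preceding lemma and $\nabla_T T$ via Proposition \ref{covariant derivative of spread vector fields}, and let the reality of $\langle\cdot,\cdot\rangle|_W$ kill the resulting difference $\langle tx,x\rangle-\langle x,tx\rangle$. Your use of $t\circ t=0$ to collapse $t\pi[\xx]t-t\pi'[\xx]t$ to $-2t\pi'[\xx]t$ is a slightly cleaner bookkeeping than the paper's term-by-term expansion, and your preliminary check that $\tn(t)$ is tangent to $\PP_\FF(W)$ is a welcome addition, but neither changes the substance of the argument.
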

	\begin{proof}
	Fix a representative $p$ of $\pp$ and a lift $\tilde c$ of $c$ such that $\tilde c(0)=p$. Since
	$$\frac{D}{d\theta } c'(\theta) = \nabla_{\tn(t)(c(\theta))}\tn(t)(c(\theta)),$$ it is enough to show that $\nabla_{T}\tn(t) = 0$.	By definition,
	\[\nabla_{T_\xx} \tn(t) = [\de \tn(t) (T_\xx)]_\xx .\]
	From $\tn(t) =\ta(\pp,\cdot)^{-1} T$ we obtain
	\begin{align*}
		\de\tn(t) (T_\xx) &=\frac{-1}{\ta(\pp,\xx)^2} (T_\xx\ta(\pp,\cdot))T_\xx + \frac1{\ta(\pp,\xx)} \de T (T_\xx)
		\\&=\frac{2}{\ta(\pp,\xx)} \re \frac{\langle tx,x \rangle}{\langle x,x \rangle } T_\xx + \frac1{\ta(\pp,\xx)} \de T (T_\xx).
	\end{align*}
	Taking into account that $\xx \in \PP_\FF(\RR p + \RR tp)$, we can take $x \in \RR p + \RR tp$. So,
	
	$$\nabla_{T_\xx} \tn(t)  =\frac{2}{\ta(\pp,\xx)} \frac{\langle tx,x \rangle}{\langle x,x \rangle } T_\xx + \frac1{\ta(\pp,\xx)} \nabla_{T_\xx} (T).$$
	By proposition \ref{covariant derivative of spread vector fields},
	$$\nabla_{T_\xx} T = [t \pi[\xx] t - t \circ \pi'[\xx] t]_\xx= \pi[\xx]t \pi[\xx] t\pi'[\xx] - \pi[\xx] t \pi'[\xx] t\pi'[\xx].$$
    Using that $t = \frac{\langle \,\cdot\, ,p \rangle}{\langle p,p \rangle}tp$,
	we have 
	\begin{align*}\pi[\xx]t \pi[\xx] t\pi'[\xx] &= \frac{\langle \,\cdot\, ,p \rangle}{\langle p,p \rangle}\pi[\xx]t\pi[\xx]tp=-\frac{\langle \,\cdot\, ,p \rangle}{\langle p,p \rangle}\frac{\langle tp ,x \rangle}{\langle x,x \rangle}\pi[\xx]tx,\\
	\pi[\xx] t \pi'[\xx] t\pi'[\xx] &=  \frac{\langle \,\cdot\, ,p \rangle}{\langle p,p \rangle}\pi[\xx]t\pi'[\xx]tp=\frac{\langle \,\cdot\, ,p \rangle}{\langle p,p \rangle}\frac{\langle tp ,x \rangle}{\langle x,x \rangle}\pi[\xx]tx,
	\end{align*}
	which implies
	$$\nabla_{T_\xx} T = -2 \frac{\langle \,\cdot\, ,p \rangle}{\langle p,p \rangle}\frac{\langle tp ,x \rangle}{\langle x,x \rangle}\pi[\xx]tx.$$
	On the other hand,
	$$T_\xx = \pi[\xx]\circ t \circ \pi'[\xx]= \frac{\langle \,\cdot\,,p \rangle}{\langle p,p \rangle}\pi[\xx]tp\quad\text{\rm and}\quad\langle tx,x \rangle = \frac{\langle x, p \rangle}{\langle p, p \rangle}\langle tp,x \rangle.$$
    So,
	$$\nabla_{T_\xx} \tn(t)  =\frac2{\ta(\pp,\xx)} \frac{\langle \,\cdot\,,p \rangle}{\langle p,p \rangle}\left( \frac{\langle tp,x \rangle}{\langle x,x \rangle} - \frac{\langle tp ,x \rangle}{\langle x,x \rangle} \right)\pi[\xx]tx=0.$$
	\end{proof}
	
	The following lemma allows us to explicitly find tangent vectors to geodesics.
	
		\begin{lemma}[see Lemma A.1 \cite{discbundles}] \label{velocity curve} Let $c$ be a smooth curve passing through $\pp\in R(V)$ at the instant $0$ and let $p$ be a representative of $\pp$. For any lift $\tilde c$ to $V^\bullet$ of the curve $c$ passing through $p$ at the instant $0$ we have
		$$c'(0) = \frac{\langle \,\cdot\,, \tilde c(0) \rangle}{\langle \tilde c(0),\tilde c(0) \rangle} \pi[\pp] \tilde c'(0).$$
	\end{lemma}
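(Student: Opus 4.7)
The plan is to chase through the chain of identifications of $T_\pp \PP(V)$ established in the paper and write $c'(0)$ explicitly under each of them. Fix a representative $p$ of $\pp$ and a lift $\tilde c$ with $\tilde c(0)=p$. Thinking of $f\in C^\infty(\PP(V))$ as an $\FF^\times$-invariant smooth function on $V^\bullet$, we have
$$c'(0)f \;=\; \frac{d}{d\theta}\Big|_{\theta=0} f(\tilde c(\theta))\;=\;\frac{d}{d\varepsilon}\Big|_{\varepsilon=0} f\bigl(p+\varepsilon\,\tilde c'(0)\bigr),$$
the second equality by Taylor expanding $\tilde c(\theta)=p+\theta\,\tilde c'(0)+o(\theta)$ and noting that the $o(\theta)$ term contributes nothing to the derivative. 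Comparing with the formula defining the isomorphism $t:\Lin_\FF(\FF p,V/\FF p)\to T_\pp\PP(V)$ of Proposition \ref{tangent space without form}, this says $c'(0)=t_\phi$ for the $\FF$-linear map $\phi:\FF p\to V/\FF p$ determined by $\phi(p)=[\tilde c'(0)]$.

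Next I use the standing identifications at regular points. Since $\pp\in R(V)$, Proposition \ref{existence of p perp} gives the decomposition $V=\FF p\oplus p^\perp$, so $V/\FF p$ is identified with $p^\perp$ via the projection $\pi[\pp]$. Under this identification, the class $\phi(p)=[\tilde c'(0)]$ is represented by $\pi[\pp]\tilde c'(0)\in p^\perp$. Finally, following the convention introduced just before Definition \ref{spread vector fields}, we view the tangent vector as the element of $\Lin(V,V)$ obtained by extending $\phi$ to be zero on $p^\perp$. For any $v\in V$ this extension gives
$$c'(0)(v)\;=\;\phi\bigl(\pi'[\pp]v\bigr)\;=\;\frac{\langle v,p\rangle}{\langle p,p\rangle}\,\phi(p)\;=\;\frac{\langle v,p\rangle}{\langle p,p\rangle}\,\pi[\pp]\tilde c'(0).$$
Since $\tilde c(0)=p$, this is precisely the claimed formula.

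There is essentially no analytical difficulty here; the only thing requiring care is making sure each of the three identifications — $T_\pp\PP(V)\simeq\Lin_\FF(\FF p,V/\FF p)\simeq\Lin_\FF(\FF p,p^\perp)\hookrightarrow\Lin(V,V)$ — is applied consistently, so that the real content of the lemma reduces to the one-line Taylor expansion above. This is the step I would write out most carefully, since the rest is a transparent consequence of the definitions.
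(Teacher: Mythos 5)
Your proof is correct and follows essentially the same route as the paper's: Taylor-expand the lift to reduce $c'(0)$ to the directional derivative along $\tilde c'(0)$, discard the component parallel to $p$ (which contributes nothing by $\FF^\times$-invariance), and read off the resulting element of $\Lin_\FF(\FF p,p^\perp)\subset\Lin(V,V)$. Your more explicit tracking of the identifications $T_\pp\PP(V)\simeq\Lin_\FF(\FF p,V/\FF p)\simeq\Lin_\FF(\FF p,p^\perp)$ is just a spelled-out version of the paper's final step.
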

	\begin{proof} Consider a smooth funcion $f: \PP(V) \to \RR$. We have
		$$c'(0)f = \frac{d}{d \varepsilon}\Big\vert_{\varepsilon =0} f\big(\tilde c(\varepsilon)\big).$$
	Expanding $\tilde c$ in Taylor series we obtain $\tilde c(\varepsilon) = \tilde c(0) + \tilde c'(0) \varepsilon + o(\varepsilon)$ and, therefore, 
	$$c'(0)f = \frac{d}{d \varepsilon}\Big\vert_{\varepsilon =0} f\big(\tilde c(0) + \varepsilon\tilde c'(0)).$$
	Since the component of $\tilde c'(0)$ parallel to $p$ does not contribute to the derivative, we have
	$$c'(0)f = \frac{d}{d \varepsilon}\Big\vert_{\varepsilon =0} f\big(\tilde c(0) + \varepsilon \pi[\pp]\tilde c'(0)),$$
	implying the result.
\end{proof}
	
	Geodesics appear in three types. Consider $\pp \in R(V)$ and $t \in T_\pp\PP(V)$, $t\ne0$ with $\langle p,p\rangle\pm1$ and $\langle t,t\rangle\in\{-1,0,1\}$.
	
	Assume that the form on $W:=\RR p+\RR tp$ is nondegenerate definite. This means that $\langle tp,tp\rangle$ and $\langle p,p \rangle$ have the same sign and, therefore, $\langle tp,tp\rangle=\langle p,p\rangle$. We parametrize the geodesic $\PP_\FF(W)$ starting at $\pp$ with velocity $t$ by
	$$\theta \mapsto [\cos(\theta)p + \sin(\theta) tp].$$
	When the form on $W$ is nondegenerate indefinite, $\langle p,p \rangle$ and $\langle tp,tp \rangle$ have opposite signs and, therefore, $\langle p,p\rangle=-\langle tp,tp\rangle$. The geodesic $\PP_\FF(W)$ is now parametrized by
	$$\theta \mapsto [\cosh(\theta)p + \sinh(\theta) tp].$$
	Finally, when the form on $W$ is degenerate, that is, $\langle tp,W \rangle =0$, then the parametrization in question is
	$$\theta \mapsto [p + \theta tp].$$
	
	The verification that these curves are indeed geodesics follows from Lemma \ref{velocity curve} and Proposition~\ref{proposition: geodesics}.

		\begin{figure}[H]
	\centering
	\begin{minipage}{.5\textwidth}
		\centering
		\includegraphics[scale = .37]{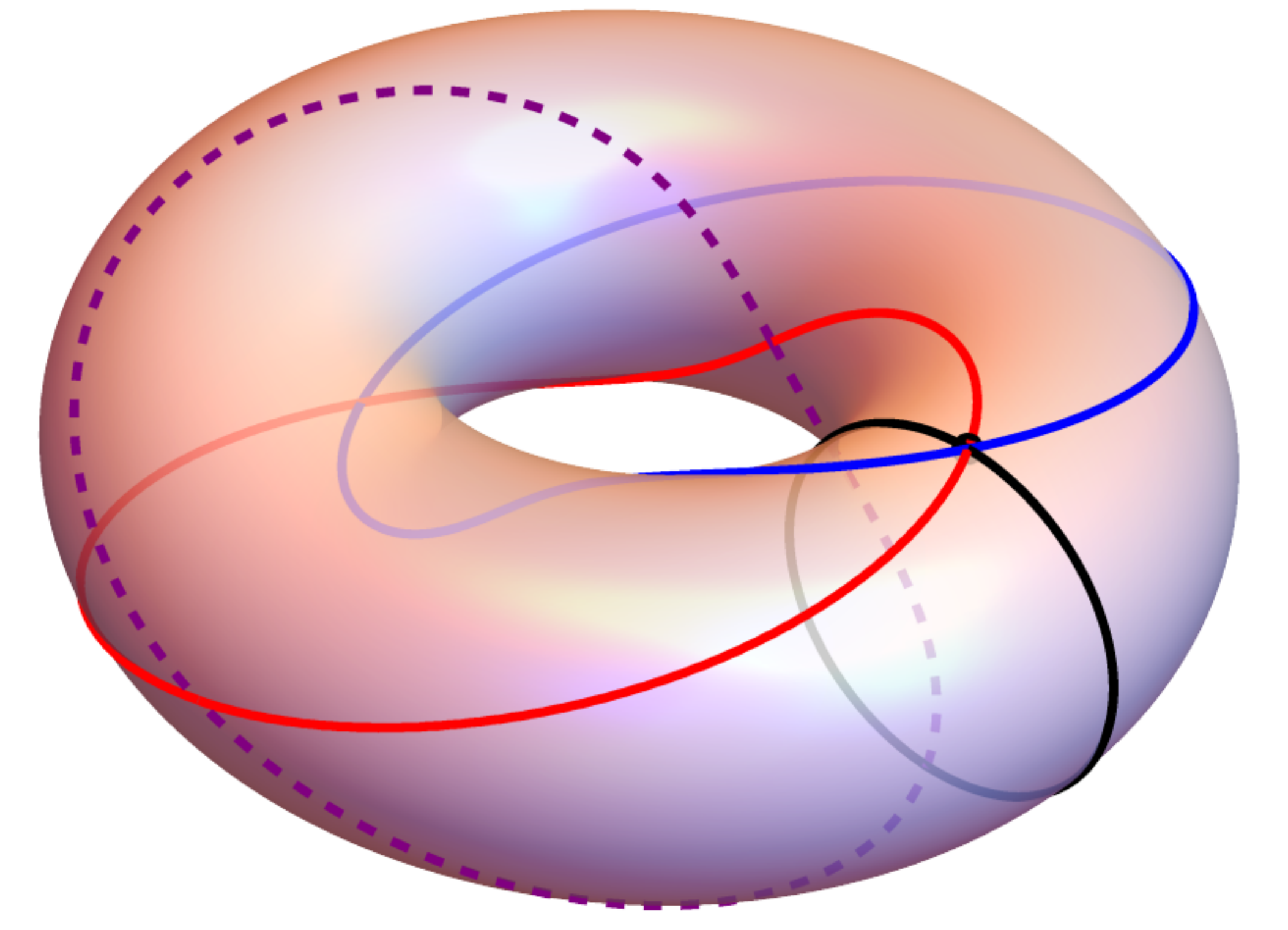}
		\caption*{(a)}
	\end{minipage}%
	\begin{minipage}{.6\textwidth}
		\centering
		\includegraphics[scale =.8]{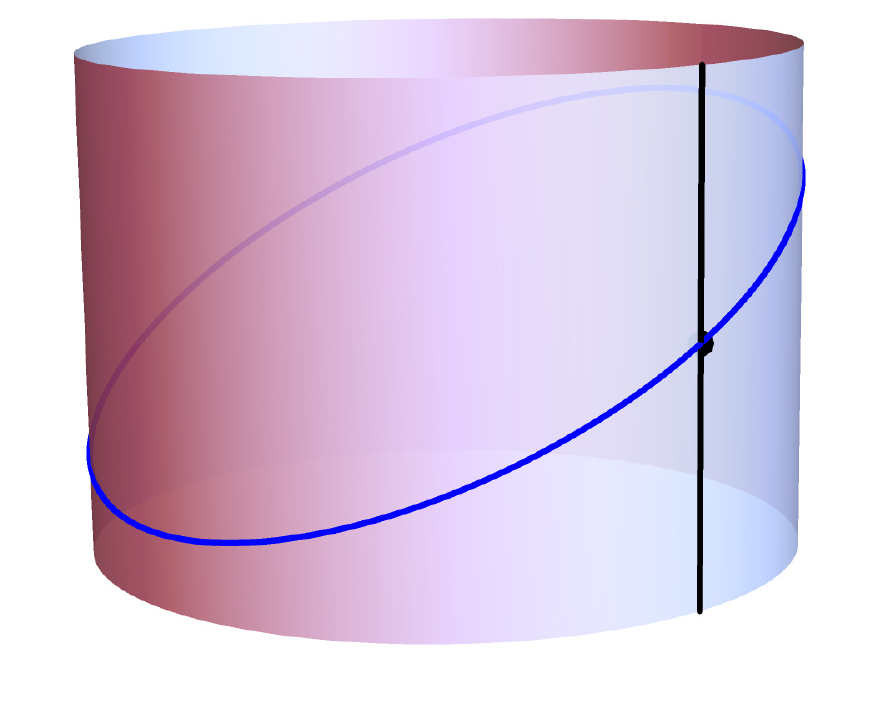}
		\vspace*{-0.2cm}
		\caption*{(b)}
	\end{minipage}
	\caption{ \textbf{(a)} Geodesics of the $\CC_s$-projective line and \textbf{(b)} geodesics of the $\DD$-projective line.}
	\label{Figure: projective lines for D and Cs}
\end{figure}

	\begin{exe} Consider the settings of the Example \ref{ex metrics} and fix a regular point $\pp$. Without loss of generality, we assume $\langle p,p \rangle =1$. Take a tangent vector $t$ at $p$ and consider a geodesic curve starting at $\pp$ with velocity $t$. We will say that this geodesic is positive, negative or null respectively when $\langle  t,t \rangle$ is positive, negative or null.
		
	Let us focus on geodesics in the projective lines. For the division algebras $\RR$, $\CC$ and $\HH$, the only geodesics appearing are the positive ones (the geodesics of the Fubini Study geometry). On the other hand,  for $\DD$ we have two types of geodesics: the positive and the null ones (tangent to the fibers). For the split-algebras $\CC_s$ and $\HH_s$, we have three types of geodesics: positive, negative and null. In Figure \ref{Figure: projective lines for D and Cs}, we represent the positive, negative and null geodesics by blue, red and black colors, respectively. 
	\end{exe}

	\begin{exe} \label{ex correspondence ds and hyperbolic} Consider $\RR^3$ with a Hermitian form of signature $-++$. As described in Example~\ref{example hyperbolic space}, the regular region of $\PP_\RR^2$ is formed by two connected components, the real hyperbolic plane $\HH_\RR^2$ and the projective de Sitter space $\mathrm{dS}^2$. The space of all non-oriented geodesics in the hyperbolic plane is the space $\mathrm{dS}^2$, a result obtained via point-plane duality. Indeed, for each $\pp \in \mathrm{dS}^2$, we have the geodesic $\PP(\pp^\perp)$, and all geodesics of $\HH_\RR^2$ are of this type (see Figure \ref{figure geodesics hyperbolic}). A positive geodesic of $\mathrm{dS}^2$ correspond to a one parameter family of geodesics in the hyperbolic plane sharing a common point in $\HH_\RR^2$; a null geodesic correspond to a family of geodesics meeting at a common point in the absolute $\partial \HH_\RR^2$; and a negative geodesic corresponds to a one parameter family of ultra-parallel geodesics on the hyperbolic plane perpendicular to it. 
	\end{exe}

	\begin{figure}[H]
		\centering
		\vspace*{-.4cm}
		\includegraphics[scale=.7]{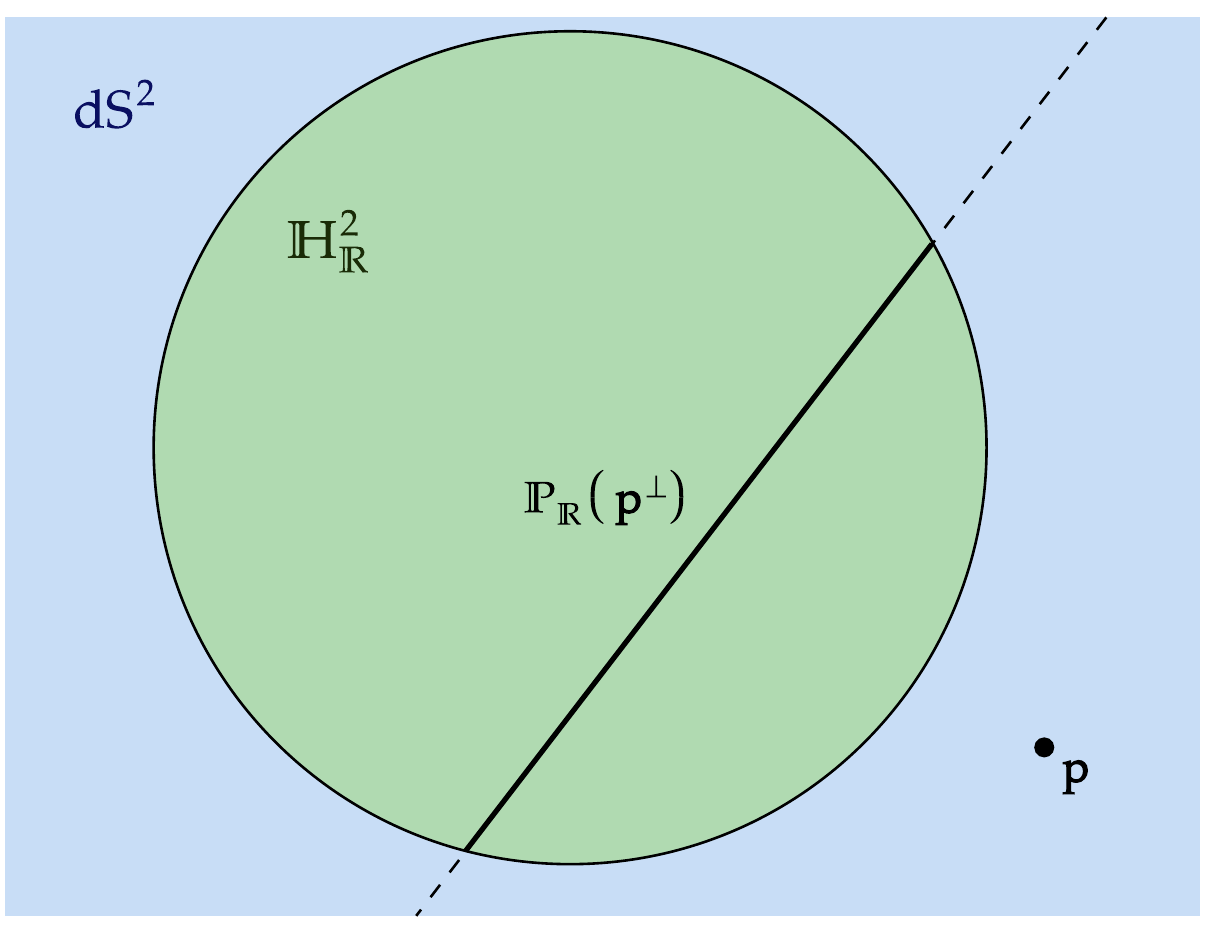}
		\caption{Real hyperbolic plane, projective de Sitter space, and space of geodesics.}
		\label{figure geodesics hyperbolic}
	\end{figure}
	
	\subsection{Curvature tensor}
	
	Let the Hermitian metric be given by
	$$\langle t_1,t_2 \rangle :=  \frac{\langle t_1(p), t_2(p) \rangle}{\langle p,p \rangle},$$
	where $t_1,t_2$ are tangent vectors at a regular point $\pp$. The pseudo-Riemannian metric $g$ is the real part of the Hermitian metric.
	
	The Riemann curvature tensor 
	$$R(U,V)W := \nabla_U \nabla_V W - \nabla_V \nabla_U W - \nabla_{[U,V]}W,$$
	defined for vector fields $U,V,W$, has a very simple expression in the settings of classical geometries~\cite{coordinatefree}:
	$$R(t_1,t_2)s = -s(t_1^* t_2 - t_2^* t_1)+(t_1 t_2^* - t_2 t_1^*)s,$$
	where $t_1,t_2,s \in T_\pp \PP_\FF(V)$, where $s^*$ is the adjoint of $s$ (see Definition \ref{defi-adjoint}) and the same goes for $t_1^*$, $t_2^*$. The proof of this fact follows from using spread vector fields (Definition \ref{spread vector fields}) and  applying Propositions \ref{covariant derivative of spread vector fields} and \ref{spread vector fields commutator vanishes}.
	
	For a real subspace $W = \RR t_1 \oplus \RR t_2$ of $T_\pp \PP_\FF(V)$, where $t_1,t_2$ are tangent vectors at $\pp$ orthonormal with respect to $g$, the sectional curvature
	$$K(W) := \frac{g(R(t_1,t_2)t_2,t_1)}{g(t_1,t_1) g(t_2,t_2) - g(t_1,t_2)^2}$$
	is given by
	$$K(W) = \frac{\langle t_1,t_1 \rangle  \langle t_2,t_2\rangle - 2 \re \langle t_1,t_2 \rangle^2  + \langle t_1,t_2\rangle \langle t_2,t_1 \rangle}{\langle t_1,t_1 \rangle  \langle t_2,t_2\rangle}.$$
	Writting $a_i := \langle t_i,t_i\rangle$ and $b:=\langle t_1,t_2\rangle$, we have $a_i \in \{1,-1\}$ and $\re b =0$, because $t_1,t_2$ is an orthonormal basis for $W$ with respect to $g$. Thus,
	$$K(W) = 1+ \frac{b b^* - 2\re b^2}{a_1a_2}=1 - \frac{3 b^2}{a_1a_2}$$
	where the last equality holds for the algebras under consideration.

	When $\FF= \RR$, the curvature is constant and equals $1$. For the dual numbers, the same happens because $b^2=0$. Note that the tangent planes to points in the dual numbers projective line do not possess a non-degenerate real two-dimensional subspace plane $W$. Thus, sectional curvatures is not defined in this case (that is not what happens in higher dimensions). The cases where $\FF = \CC,\HH$ are detailed at \cite[Section 4.6]{coordinatefree}.
	
	Now we analyse the case where $\FF$ is $\CC_s$ or $\HH_s$. If $V$ is two-dimensional, then $T_\pp \PP_\FF(V)$ has dimension one as an $\FF$-module. Therefore, $t_1 = k t_2$ for some $k \in \FF^\times$. Note that $b = k a_2$ and $a_1 = - k^2a_2$. Therefore,
	$$K(W) = 1 - \frac{3 b^2}{a_1a_2} = 4.$$
	
	If $V$ has $\FF$-dimension higher than $2$, then $K(W)$ can be any real number. Indeed, consider the tangent vectors $e_1$, $e_2$ at $\pp$ such that $\langle e_1,e_1 \rangle = \langle e_2,e_2 \rangle=1$ and $\langle e_1,e_2 \rangle =0$, which exist because $T_\pp \PP_\FF(V)$ is at least two dimensional as an $\FF$-module. For $t_1=e_1$ and $t_2 =\sinh(\theta)je_1+\cosh(\theta)e_2$, we obtain $K(W) = 1-3\sinh(\theta)^2$. For $t_1=e_1$ and $t_2 =\cosh(\theta)je_1+\sinh(\theta)e_2$, we obtain
	$K(W) = 1+3\cosh(\theta)^2$. Finally, for $t_1 = e_1$ and $t_2 = j(\cos(\theta)e_1 + \sin(\theta)e_2)$, we have $K(W) = 1+3\cos(\theta)^2$.
	
	\smallskip
	
	Summarizing, for the algebras other than the division ones, we have: for dual numbers, the curvature is always one; for split algebras, the curvature equals $4$ in the projective line case and can be any number otherwise.
	
	Note that, had we taken the Hermitian metric with a negative sign, then the curvature formula would have its sign changed as well. For the hyperbolic models, for instance, we take the negative sign (Example \ref{example hyperbolic space}). Thus, the real hyperbolic spaces have curvature $-1$. For the complex and quaternionic hyperbolic spaces the obtained curvature is $-4$ for one dimensional spaces and, for higher dimension, the curvature lies in $[-4,-1]$. 
	
	\section{Spaces of oriented geodesics on Euclidean, elliptical and hyperbolic two-dimensional geometries.}
	\label{section space of geodesics}
	
	In the following examples, we consider the algebras $\FF = \CC$, $\DD$, $\CC_s$ and the $\FF$-module $\FF^2$ endowed with the Hermitian form $\langle u,v \rangle = u_1 v_1^\ast + u_2 v_2^\ast$. We will see that the regular components of the spaces $\PP_\CC^1$, $\PP_\DD^1$ and $\PP_{\CC_s}^1$ are the spaces of geodesics of the round sphere, Euclidean plane and hyperbolic plane, respectively.
		
	\subsection{Points in the complex projective line = oriented geodesics in $\SP^2$}
	In these settings, the Riemann sphere $\PP_\CC^1$ is a constant curvature sphere. So, a given point $\pp\in\PP_\CC^1$ determines a unique equator (the geodesic equidistant from $\pp$ and its antipodal point). This geodesic is oriented in the counterclockwise direction as seen from $\pp$. The Hermitian metric measures the oriented angle between two oriented geodesics and at an intersection point.
	
	\subsection{Points in the dual number projective line = oriented geodesics in $\mathbb E^2$}
	Identify $\EE^2$ with the complex plane $\CC$. The cylinder $\SP^1 \times \RR$ can be identified with $T \SP^1$ via the map $(e,s) \mapsto (e,sie)$, where $\SP^1$ is taken as the circle of unit complex numbers. An oriented line in the Euclidean plane $\EE^2$ is given by 
	$c_{e,s}(t):=s ie+ et$, where $e$ is a unit vector and $s$ is a real number. Thus, we have a one-to-one correspondence between points in the tangent bundle $T \SP^1$ and the space of oriented lines in the plane which is given by $\SP^1 \times \RR \ni (e,s) \mapsto c_{e,s}$.
	
	Fix a point $p= a+ib$ in the plane. The lines passing through $p$ are $c(e(\theta), s(\theta))$, where $e(\theta) = \exp(i\theta)$ and $s(\theta) := -a \sin(\theta)+b\cos(\theta)$.
	Thus, taking the coordinates $(x+iy,s) \in \CC \times \RR$, where $\CC \times \RR$ is the ambient space of the cylinder $\SP^1 \times \RR$, the previously described family of oriented lines is obtained by intersecting the linear subspace $s = b x-a y$ with the cylinder. Thus, families of oriented lines sharing a fixed point $p$ correspond to planes that cut the cylinder in an ellipse.
	Each of the remaining planes cut the cylinder in two components (lines); one of them corresponds to a family of oriented geodesics in $\EE^2$ and, the other, to the same family of geodesics in $\EE^2$ with the opposite orientation.
	
	The described curves in $T \SP^1$ are geodesics of the following metric on the cylinder:
	$$g_{T\SP^1}((u_1,s_1),(u_2,s_2)) := \re u_1\overline{u_2}.$$ 
	Thus, the distance between two points on $T\SP^1$ is the angle between the corresponding oriented lines.
	
	Now, we just have to identify the described cylinder with the dual number projective line. Take $V=\CC+\ee\, \CC$ as a $\DD$-module. The diffeomorphism $f:\PP_\DD^1 \to \SP^1 \times \RR$ given by $[e+ k \ee i e] \mapsto (e^2, 2k)$ is the desired isometry (up to rescaling the metrics), where $e$ is a unit complex number. Indeed, $df^\ast g_{T\SP^1} = 4 g_{\PP_\DD^1}$. Therefore, $\PP_\DD^1$ is the space of all oriented Euclidean lines. On the dual numbers projective line, a positive geodesic represents a family of oriented lines rotating around a common point in the Euclidean plane while a null geodesic represents a one parameter family of parallels lines (see Figure \ref{Figure: projective lines for D and Cs}(b)).  
	
	\subsection{Regular points of the split-complex projective line = oriented geodesics in $\HH_\RR^2$} As we discussed in Example \ref{ex correspondence ds and hyperbolic}, the projective de Sitter space $\mathrm{dS}^2$ is the space of all non-oriented hyperbolic geodesics.  Topologically, $\mathrm{dS}^2$ is an open Möbius strip. The regular part of the split-complex projective line is a cylinder (see Figure \ref{Figure: projective lines for D and Cs}(a); the regular region is the cylinder obtained from removing the singular circle, the dashed curve in purple, from the torus) and it is an isometric double cover of $\mathrm{dS}^2$ (up to rescaling the metrics). Furthermore, it constitutes the space of oriented geodesics of the hyperbolic plane.

	The cross product on $\RR^3$ endowed with the canonical Minkowski metric $-++$ is given by $e_1 \times e_2 = e_3$, $e_2 \times e_3 = - e_1$, $e_3 \times e_1 = e_2$, where $e_1,e_2,e_3$ is the canonical basis of $\RR^3$. Equivalently, the cross product is defined by the formula $\langle u \times v, w \rangle e_1 \wedge e_2 \wedge e_3 = u \wedge v \wedge w$.
	Given two points $\pp,\qq \in \partial \HH_\RR^2$, the vector $p \times q$ represents the point in $\mathrm{dS}^2$ corresponding to the geodesic $G$ of the hyperbolic plane connecting $\pp$ and $\qq$, i.e., $G = \PP_\RR\big((p\times q)^\perp\big)$.
	
	Taker $V=\CC_s^2$ with the Hermitian form defined in Example \ref{ex metrics}.
	For each $[(a,a'):(b,b')] \in R(V)$ we have the points $A(a,b):=[a^2+b^2:a^2-b^2:2ab]$ and $B(a',b'):=[a'^2+b'^2:-a'^2+b'^2:-2a'b']$ on $\partial \HH_\RR^2$ and, thus, the oriented geodesic of the hyperbolic plane connecting $B(a',b')$ to $A(a,b)$. Observe that the condition for $[(a,a'):(b,b')]$ to be in $R(V)$ is $aa'+bb'\neq 0$, and the same condition guarantees $A(a,b) \neq B(a',b')$ in $\PP_\RR^2$. Therefore, we obtain a correspondence between $R(V)$ and oriented geodesics in $\HH_\RR^2$. The point $A(a,b) \times B(a',b')$ in $\mathrm{dS}^2$ corresponds to the non-oriented geodesic containing $A(a,b)$ and $B(a',b')$ (see Example \ref{ex correspondence ds and hyperbolic}).
	\begin{figure}[H]
	\centering
	\vspace*{-.4cm}
	\includegraphics[scale=.7]{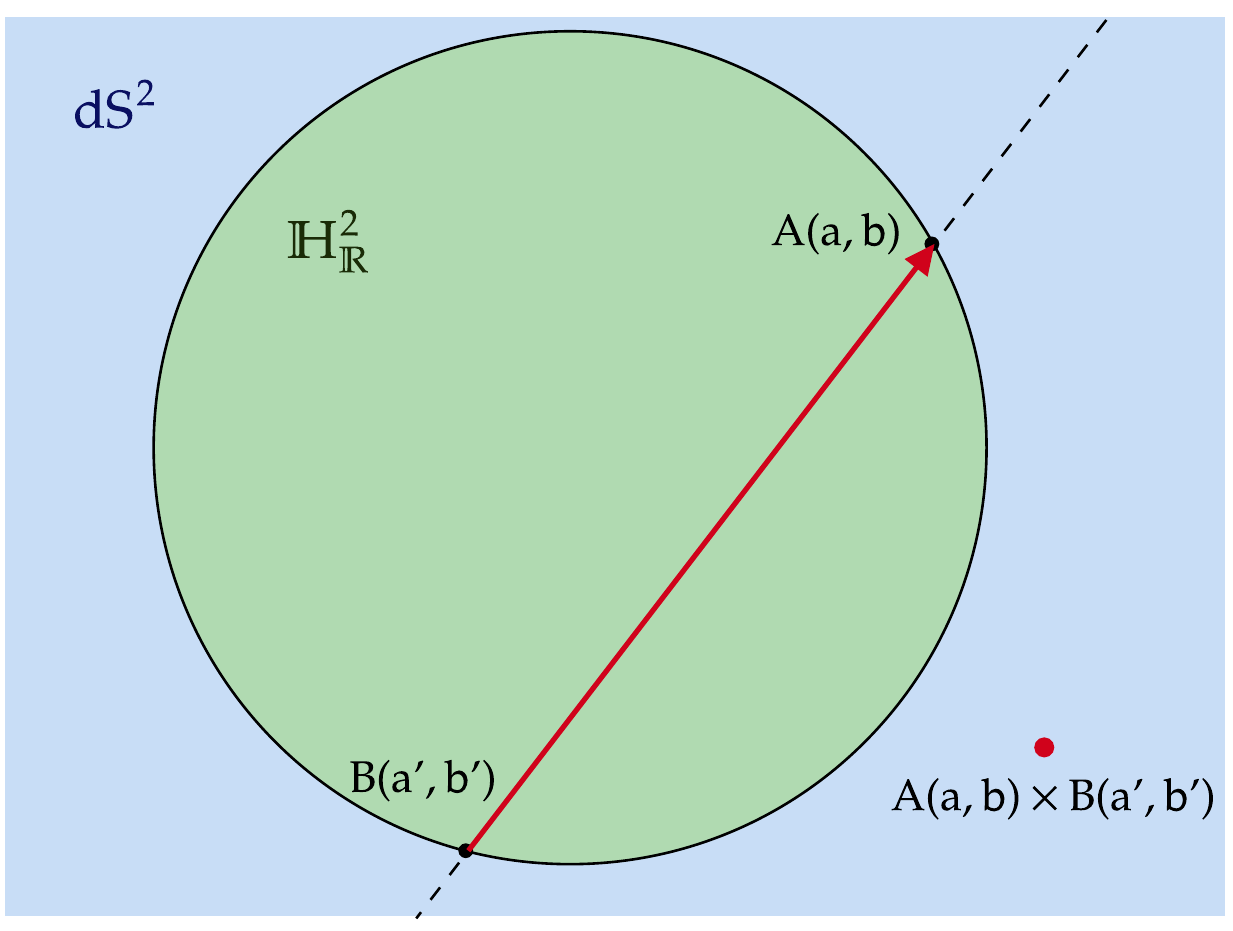}
	\caption{Points $A(a,b)$, $B(a',b')$ and $A(a,b) \times B(a',b')$.}
	\label{figure space of lines}
	\end{figure}
	
	The double cover $f:R(V) \to \mathrm{dS}^2$ of interest is given by
	$[(a,a'):(b,b')]\mapsto A(a,b)\times B(a',b')$,
    that is,
	$$f([(a,a'):(b,b')]) = [ab'-a'b:ab'+a'b:-aa' + bb'].$$
	A direct computation, sketched bellow, shows that $df^\ast g_{\mathrm{dS}^2} = -4 g_{\PP_{\CC_s}^1}$. Thus, up to rescaling the metrics, $f$ is an isometric $2$ to $1$ cover map.
	
	In order to verify that $df^\ast g_{\mathrm{dS}^2} = -4 g_{\PP_{\CC_s}^1}$, consider a point $\pp \in R(V)$ with representative $p=((a,a'),(b,b'))$. We assume that $aa'+bb' =1$. The tangent vectors 
	$t_1 = \langle \cdot, p \rangle v$ and ${t_2 = \langle \cdot, p \rangle (1,-1)v}$ at $\pp$, where $v=((b',b),(-a',-a))$,
	satisfy $g_{\PP_{\CC_s}^1}(t_1,t_1) = 1$, $g_{\PP_{\CC_s}^1}(t_2,t_2)=-1$, and $g_{\PP_{\CC_s}^1}(t_1,t_2)=0$. 
	The curves $\gamma_1(\theta) = [p+\theta v]$ and $\gamma_2(\theta) = [p+\theta (1,-1)v]$ have respective velocities $t_1$ and $t_2$ at $\theta =0$. 
	
	Now we compute $df(t_i)$ using the curve $\gamma_i$. We have
	$f \circ \gamma_1(\theta) = [q+\theta w_1+o(\theta)]$, where
	$$q =(ab' -a'b,ab'+a'b,-aa'+bb'), \quad \text{and}\quad w_1=(-a^2-b^2+a'^2+b'^2,-a^2+b^2-a'^2+b'^2),$$ and, by Lemma \ref{velocity curve}, we obtain $df(t_1) = \langle \cdot,q \rangle w_1$.
	Similarly, $f \circ \gamma_2(\theta) = [q+\theta w_2+o(\theta)]$ and $df(t_2)=\langle \cdot, q \rangle w_2$, where $$w_2 = (a^2+b^2+a'^2+b'^2,a^2-b^2-a'^2+b'^2,2ab-2a'b').$$
	The identity $df^\ast g_{\mathrm{dS}^2} = -4 g_{\PP_{\CC_s}^1}$ follows from the fact that $$df^\ast g_{\mathrm{dS}^2}^2(t_1,t_1)=-4, \quad df^\ast g_{\mathrm{dS}^2}(t_2,t_2)=4, \quad \text{and} \quad df^\ast g_{\mathrm{dS}^2}(t_1,t_2)=0.$$
	
	\medskip
	
	Considering the split-quaternions $\HH_s$ and the module $\HH_s \times \HH_s$, we obtain an ambient space for a transition between the three described geometries. Indeed, with the Hermitian form
	$\langle u,v \rangle =  u_1 v_1^\ast + u_2 v_2^\ast$ on $\HH_s \times \HH_s$, the maps in the Example \ref{example split-quaternionic projective spaces} are isometric embeddings (when restricted to the regular region). Thus, there exists a natural transition between the regular regions of the three discussed $2$-dimensional geometries inside the split-quaternionic projective line.
	
	\begin{rmk} Taking $\HH_s\times \HH_s$ with the Hermitian form $\langle u,v \rangle = - u_1 v_1^\ast + u_2 v_2^\ast$, we obtain the hyperbolic spaces $\HH_\CC^1$, $\HH_\DD^1$ and $\HH_{\CC_s}^1$, where $\HH_\FF^1$ is formed by the regular points $\pp$ admiting a representative $p$ satisfying $\langle p,p \rangle < 0$. As above, we can geometrically transition between this geometries inside the split-quaternionic projective line. A transition between hyperbolic geometries is also studied in \cite{tre} via a more abstract route. In contrast, here we use that these geometries share a common ambient space.
	\end{rmk}
	\section{Bidisc geometry}
	\label{section bidisc}
	The bidisc is the Riemannian manifold $\HH_\CC^1 \times \HH_\CC^1$, the product of two Poincaré discs, with the canonical Riemannian product metric. The metric we take in the Poincaré disc $\HH_\CC^1$ is the one defined in Example \ref{example hyperbolic space}. In this section, we want to show how the bidisc appears as part of a projective line. For that purpose, we use an algebra not previously considered.
	
	Let $\FF$ be the real algebra $\CC \times \CC$ with the involution $(a,b)^* = (\overline a, \overline b)$.  The algebra of self-adjoint elements of $\FF$ in this case is $\RR \times \RR$. 
	The projective line $\PP_\FF^1$ is diffeomorphic to the product of two Riemann spheres $\PP_\CC^1 \times \PP_\CC^1$. Indeed, the diffeomorphism is given by the map $\Lambda:\PP_\FF^1 \to \PP_\CC^1 \times \PP_\CC^1$, $[(a_1,b_1),(a_2,b_2)] \mapsto ([a_1,a_2],[b_1,b_2])$.
	
	Consider in $\FF^2$ the $\FF$-valued Hermitian form
	$$\langle u,v \rangle = -u_1v_1^* + u_2v_2^*.$$
	For $u=((a_1,b_1),(a_2,b_2))$,
	$$\langle u,u \rangle = (- |a_1|^2 +|a_2|^2,-|b_1|^2 +|b_2|^2) \in \RR \times \RR.$$
	We define the regular region $R$ of $\PP_\FF^1$ as the set of all $\uu$ such that $\langle u,u \rangle$ is a unit, which means that both coordinates of $\langle u,u \rangle$ are non-zero real numbers.
	Observe that $R$ is the union of four disjoint $4$-balls. Indeed, these balls are
	$$B_{++} = \{\uu\in\PP_\FF^1\mid\langle u,u \rangle \in \RR_{>0} \times \RR_{>0}\},$$
	$$B_{- +} = \{\uu\in\PP_\FF^1\mid\langle u,u \rangle \in \RR_{<0} \times \RR_{>0}\},$$
	$$B_{+ -} = \{\uu\in\PP_\FF^1\mid\langle u,u \rangle \in \RR_{>0} \times \RR_{<0}\},$$
	$$B_{- -} = \{\uu\in\PP_\FF^1\mid\langle u,u \rangle \in \RR_{<0} \times \RR_{<0}\},$$
	and $R = B_{++} \sqcup B_{+-} \sqcup B_{- +} \sqcup B_{- -}$.
	We denote $B_{--}$ by $\BB^4$.

	Let $\lambda:\FF^2 \to \CC^2 \times \CC^2$ be given by the formula $((a_1,b_1),(a_2,b_2)) \mapsto ((a_1,a_2), (b_1,b_2))$. If $\pi_1, \pi_2$ stand respectively for the projections $\CC^2 \times \CC^2 \to \CC^2$ in the first and second coordinates, we define $\lambda_1  = \pi_1 \circ \lambda$ and $\lambda_2  = \pi_2 \circ \lambda$.

	Consider on $\CC^2$ the $\CC$-valued Hermitian form $$\langle (a_1,a_2),(a_1',a_2') \rangle := - a_1\overline{a_1'}+a_2\overline{a_2'}.$$ 
	For $u,u' \in \FF^2$ we have
	$$\langle u, u' \rangle = (\langle \lambda_1(u),\lambda_1(u') \rangle, \langle \lambda_2(u),\lambda_2(u') \rangle).$$

	The complex hyperbolic line $\HH_\CC^1$ is formed by $\pp \in \PP_\CC^1$ such that $\langle p, p \rangle <0$. Therefore, the map $\Lambda: \PP_\FF^1 \to \PP_\CC^1 \times \PP_\CC^1$ provides a diffeomorphism between $\BB^4$ and $\HH_\CC^1 \times \HH_\CC^1$. The ball $\BB^4$ will be our projective model for the bidisc.
	
    A unitary operator $T:\FF^2 \to \FF^2$ is a $\FF$-linear map satisfying $\langle T(u) , T(v) \rangle = \langle u,v  \rangle$. Writting $T$ as the matrix
	$$\begin{pmatrix}
    (a_{11},b_{11}) & (a_{12},b_{12})\\
    (a_{21},b_{21}) & (a_{22},b_{22})
    \end{pmatrix}$$ 
    we obtain that $T$ is unitary if, and only if, the matrices $(a_{ij})$ and $(b_{ij})$ are unitary as well.
    Thus we have the map $\UU(\FF^2, \langle \cdot, \cdot \rangle) \to \UU(1,1) \times \UU(1,1)$, $(a_{ij},b_{ij})\mapsto \big((a_{ij}),(b_{ij})\big)$ which is a group isomorphism.
    The action of unitary transformations on $\BB^4$ correspond to the action of ${\UU(1,1) \times \UU(1,1)}$ on $\HH_\CC^1 \times \HH_\CC^1$. If we restrict ourselves to determinant $1$ matrices, the above isomorphism holds for $\SU$ matrices as well: $\SU(\FF^2, \langle \cdot, \cdot \rangle) \simeq \SU(1,1) \times \SU(1,1)$. The same goes for projective unitary group: $\PU(\FF^2, \langle \cdot, \cdot \rangle) \simeq \PU(1,1) \times \PU(1,1)$.

    Now we consider the Hermitian metric on $\HH_\CC^1$ introduced in Example \ref{example hyperbolic space}. In a similar fashion, we consider the $\FF$-valued Hermitian metric
    $$\langle t,t' \rangle = -\frac{\langle t(p),t'(p) \rangle}{\langle p,p \rangle}$$
    for $t,t' \in T_\pp \PP_\FF^1$, where $\pp$ is a regular point.
    Writing the tangent vector $t\in T_\pp \PP_\FF^1$ as $$t = \frac{\langle \cdot , p \rangle}{\langle p ,p \rangle }t(p)$$
    we obtain that its image in $\PP_\CC^1 \times \PP_\CC^1$ is $(s_1,s_2)$, where
    $$s_j:=\frac{\langle \cdot, \lambda_j(p) \rangle}{\langle \lambda_j(p), \lambda_j(p) \rangle} \lambda_j(t(p)).$$
    Therefore, the Hermitian metric on the $\FF$ projective line corresponds to the pair of Hermitian metrics arising from the two Riemann spheres. More precisely
    $$\langle t,t' \rangle = (\langle s_1,s_2 \rangle, \langle s_1',s_2' \rangle).$$
    
    From this $\FF$-Hermitian metric, we obtain a Riemannian metric by taking the real part of the $\FF$-value Hermitian metric, which gives an element of $\RR \times \RR$, and then summing the obtained coordinates. Let us denote this metric by $g_{\BB^4}$. Therefore, the map $\Lambda: \BB^4 \to \HH_\CC^1 \times \HH_\CC^1$ is an isometry:
    $$g_{\BB^4}(t,t') = g_{\HH_\CC^1}(s_1,s_2) + g_{\HH_\CC^1}(s_1',s_2').$$
    
    Finally, the group of orientation preserving isometries of the bidisc $\HH_\CC^1 \times \HH_\CC^1$ is generated by $\PU(1,1) \times \PU(1,1)$ and the map that swaps the coordinates of the two hyperbolic discs ${\tau:(\pp,\qq) \mapsto (\qq,\pp)}$. In~$\BB^4$, this map $\tau$ is given by $\tau:[(a_1,b_1):(a_2,b_2)] \mapsto [(b_1,a_1):(b_2,a_2)].$
    Hence, $\BB^4$ is a projective model for the bidisc and the unitary group of $(\FF^2, \langle \cdot , \cdot \rangle)$ together with $\tau$ provides the orientation preserving isometries.
    Furthermore, there exists in $\BB^4$ an orientation preserving isometry sending the pair of points $\uu,\uu'$ to the pair of points $\vv,\vv'$ if, and only if, either $\ta(\uu,\uu') = \ta(\vv,\vv')$ or $\ta(\uu,\uu') = \ta(\tau \vv,\tau \vv')$, where the tance here is $\RR \times \RR$-valued, obtained from the $\FF$-valued Hermitian form defined on $\FF^2$.
    
	\newpage

\end{document}